\documentclass[11pt, onesided]{amsart}
\usepackage{latexsym}
\usepackage{graphicx}
\usepackage{latexsym}
\usepackage{amsfonts,amsmath,amssymb,amsthm}
\usepackage{url}

\usepackage{hyperref}
\usepackage{cleveref}

\usepackage{autonum}

\usepackage[hmarginratio=2:1]{geometry}

\newcommand{\bpr}{\begin{prop}}
\newcommand{\epr}{\end{prop}}
\newcommand{\beq}{\begin{equation}}

\newtheorem{theorem}{Theorem}[section]

\newtheorem{lemma}[theorem]{Lemma}

\theoremstyle{remark}
\newtheorem{example}[theorem]{Example}

\newcommand{\vanish}[1]{} 
\begin{document}
\title{On the Average Sizes of Ideals and Antichains in Some Infinite Families of Posets}

\author{Mikl\'os B\'ona and Ricardo G\'omez-A\'iza}

\date{}

\maketitle

\begin{abstract}
We study the average sizes of ideals and antichains in several families of posets arising from rooted unlabeled trees. 
For ideals, we encode order ideals by coloring vertices red, with the condition that every descendant of a red vertex is also red. 
Using generating functions and singularity analysis, we show that for every family governed by a quadratic root decomposition, the average number of red vertices in a tree of size $n$ is asymptotic to $n/3$. 
This includes binary plane trees, plane 1-2 trees, plane 2-trees, and 0-1-trees, while unrestricted rooted plane trees instead have average ideal size asymptotic to $2n/5$. 
For antichains, represented by pairwise incomparable blue vertices, the corresponding asymptotic constants depend on the particular branching function; for example, binary plane trees have average antichain size asymptotic to $n/6$. 
We also consider plane trees in which every vertex has at most $k$ children and prove that the asymptotic proportion of red vertices increases strictly with $k$, from $1/3$ when $k=2$ to the limiting value $2/5$ for unrestricted rooted plane trees. 
The asymptotic proportion of blue vertices also increases strictly with $k$, tending to $3/10$.
The proofs are based on bivariate generating functions together with the analytic implicit-function and smooth implicit-function methods of analytic combinatorics.
\end{abstract}

\section{Introduction}
In a finite partially ordered set, it is a natural and often interesting and difficult problem to find the number of all antichains (equivalently, all order ideals), or the average size of an ideal, or the average size of an antichain. As an example, the total number of antichains in the poset of all subsets of an $n$-element set
ordered by inclusion is not known \cite{JenssenMalekshahianPark}.

In this paper, we will consider several infinite families of partially ordered sets, each defined by a variety of rooted unlabeled trees. 
We will view these trees as partially ordered sets, with the root being the unique maximum element, and $x<y$ if the vertex $x$ is a descendant of the 
vertex $y$. The  tree varieties will
be binary plane trees (counted by Catalan numbers $C_n$; see \cite{StanleyCatalan}), plane 1-2 trees (counted by Motzkin numbers; see \cite{DonagheyShapiro}), rooted plane trees, (counted by the Catalan numbers $C_{n-1}$, see \cite{StanleyCatalan}), 0-1-trees (see \cite{Bona01Trees}; these are counted by Boolean-Catalan numbers), and plane 2-trees (the number of such trees on $2n-1$ vertices is $C_n$). 
We find that for binary plane trees, plane 1-2 trees, plane 2-trees, and 0-1-trees, the average ideal in a poset chosen uniformly at random from all ideals in all trees of size $n$ is asymptotically $n/3$. Rooted plane trees behave differently: in that family the corresponding average is asymptotically $2n/5$. We will explain the general $n/3$ phenomenon for the families governed by a quadratic root decomposition. We will then show that the size of the average antichain does not behave in the same
way across the three families. Interestingly, for binary plane trees on $n$ vertices, the size of the average antichain will be asymptotically $n/6$, so roughly half of the size of the average ideal. 

In Section~4 we interpolate between two of these ideal-counting regimes.  For each $k\geq 2$, we consider plane trees in which every vertex has at most $k$ children, and we determine the asymptotic proportion $c_k$ of red vertices.  We prove that these constants are strictly increasing with $k$, beginning with $c_2=1/3$ for plane 1-2 trees and tending to $2/5$ as $k\to\infty$, the value for unrestricted rooted plane trees. We also prove that the corresponding blue proportions are strictly increasing and converge to $3/10$.

Counting ideals in a tree-poset is equivalent to counting trees in which some vertices are colored red subject to that condition that if $x$ is red, then all the descendants of $x$ are red. Counting antichains in a tree-poset is equivalent to counting trees in which some vertices are colored blue subject to the condition that if $x$ and $y$ are two blue vertices, then $x$ and $y$ are incomparable elements of the poset.  For the sake of brevity, we will use this terminology in this paper. 

\section{The average size of ideals}
\label{sec:ideals}

As a warmup,  we compute the average size of ideals in all binary plane trees on $n$ vertices. 

\subsection{An example: Binary plane trees}
\label{sec:ExampleRed}

A {\em binary plane tree} is a rooted plane tree on unlabeled vertices in which every non-leaf vertex has one or two children, and every child is either a
left child or a right child, even if it is an only child. It is well-known that the number of such trees on $n$ vertices is the Catalan number $C_n$, and that
their generating function is $C(z)=\sum_{n\geq 0} C_nz^n=\frac{1-\sqrt{1-4z}}{2z}$, with $C_0=1$ (the empty tree).

Let $T(z)=C(z)-1$ be the generating function of the class $\mathcal T$ of all nonempty binary plane trees.  Then
 \begin{equation}
 \label{fort}
 	T(z)=\frac{1-2z-\sqrt{1-4z}}{2z}
\end{equation}
satisfies the \emph{Lagrange} functional equation
$$
	T(z)=z\Phi\big(T(z)\big)
$$
with \emph{characteristic} (or \emph{branching}) function
\begin{equation}
\label{eq:branchEqBinary}
	\Phi(y) = 1+2y+y^2
\end{equation}
because $\mathcal T$ admits the recursive combinatorial specification
$\mathcal T = \mathcal Z +\mathcal Z \times \mathcal T +\mathcal Z \times \mathcal T + \mathcal Z \times \mathcal T \times \mathcal T$
with $\mathcal Z$ an atomic class representing the root.

Now let $\mathcal I=\mathcal I(\mathcal T)$ be the corresponding class of \emph{trees with ideals} (\emph{based on} $\mathcal T$). This means that $\mathcal I$ consists of all the rooted binary plane trees but now their vertices may be colored red, with the rule that if a vertex is red, then all its descendants are red too (in general each element of $\mathcal T$ admits several such colorings). Let $I(z)$ be the generating function of $\mathcal I$, again with $z$ marking the number of vertices. The trees with ideals that have their roots colored red satisfy that each of their vertices is also colored red, and these type of trees with ideals contribute $T(z)$ to the generating function $I(z)$. On the other hand, if the root in a tree with ideals is not colored red, then each of its two subtrees is either empty or is itself a tree with ideals, i.e. a rooted binary plane tree that is colored subject to the same rule. Consequently,
\begin{equation}
\label{eq:feidealUni}
	I(z)=T(z)+z\big(1+I(z)\big)^2.
\end{equation}
Solving this quadratic equation for $I(z)$ and choosing the branch satisfying $I(0)=0$ gives \[ I(z) = \frac{1-2z-\sqrt{1-4z\big(1+T(z)\big)}}{2z}. \] Using the explicit form of $T(z)$ 
given in equation \eqref{fort}, we obtain \[  I(z)= \frac{1-2z-\sqrt{\,2\sqrt{1-4z}-1\,}}{2z}.  \] 
The first few terms are \[ I(z) = 2z+6z^2+21z^3+80z^4+322z^5+1348z^6 +5814z^7+25674z^8+115566z^9+\cdots \]
(see (a) in Figure \ref{fig:BinaryPlane}).
This is sequence A129442 in the Online Encyclopedia of Integer Sequences.
\begin{figure}[h] 
   \centering
   \includegraphics[width=6.6in]{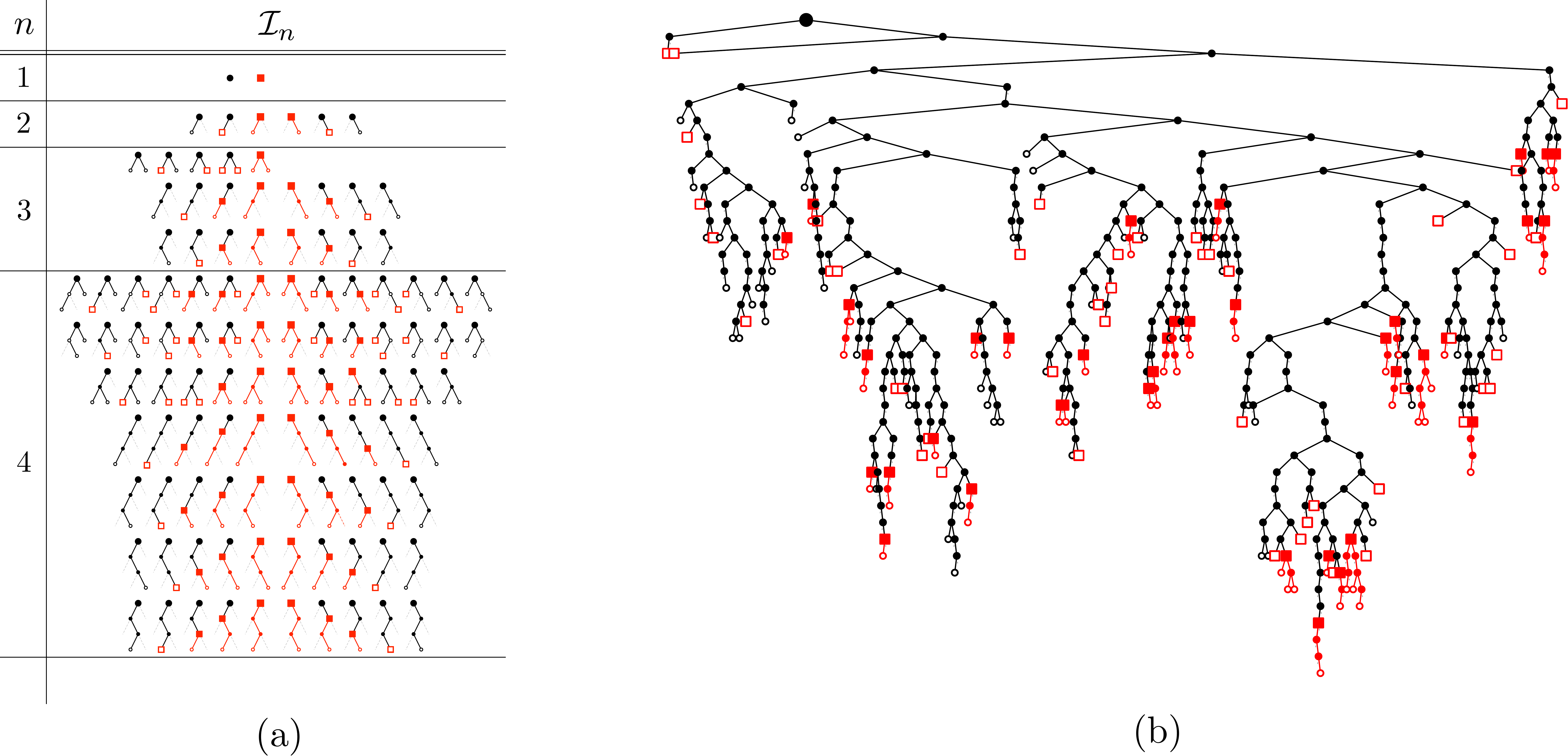} 
   \caption{Binary plane trees with ideals (the maximal element of each ideal in these trees with ideals are drawn as square nodes, nonfilled if they are leaves). (a) The first binary plane trees with ideals (the trees with red roots are shown in the middle). (b) A random binary plane tree with ideals, its size is $n=500$ and it was generated uniformly at random among all such trees of size $n=500$ (the number of red vertices turned out to be $161 \approx 500/3 =166.66\ldots$).
 }
   \label{fig:BinaryPlane}
\end{figure}

The dominant singularity of $I(z)$ occurs when $2\sqrt{1-4z}-1=0$, and therefore $\rho=3/16$. Thus the exponential growth rate of the coefficients is $\rho^{-1}=16/3$.
For more precise asymptotics, we can use the generalized binomial theorem.  Put $\epsilon=1-z/\rho$ and get
\[
1-4z
=1-4\rho(1-\epsilon)
=\frac14(1+3\epsilon).
\]
Therefore
\[
\sqrt{1-4z}
=\frac12\sqrt{1+3\epsilon}
=\frac12+\frac34\epsilon+O(\epsilon^2),
\]
and hence
\begin{equation} \label{firstrate}
2\sqrt{1-4z}-1
=\frac32\epsilon+O(\epsilon^2)
\sim \frac{3}{2}\left(1-\frac{z}{\rho}\right).
\end{equation}
Taking the square root of this expression and using $1/(2\rho)=8/3$, while absorbing the remaining analytic terms into the error term, gives
\[
I(z) = I(\rho) -\frac{8}{3}\sqrt{\frac32}
\left(1-\frac{z}{\rho}\right)^{1/2}
+O\left(1-\frac{z}{\rho}\right).
\]
By the Standard Transfer Theorem (the sim-transfer theorem; see
\cite[Corollary VI.1]{FlajoletSedgewick}),
\[
[z^n]\left(1-\frac{z}{\rho}\right)^{1/2}
\sim
\frac{\rho^{-n}n^{-3/2}}{\Gamma(-1/2)}
=-\frac{1}{2\sqrt{\pi}}\rho^{-n}n^{-3/2}.
\]
Consequently,
\begin{equation}
\label{eq:asympBinary}
[z^n]I(z)
\sim
\frac{8}{3}\sqrt{\frac32}\frac{1}{2\sqrt{\pi}}
\rho^{-n}n^{-3/2}
=
\frac{4}{\sqrt{6\pi}}\,n^{-3/2}
\left(\frac{16}{3}\right)^n.
\end{equation}

We now refine the generating function $I(z)$ by keeping track of the number of red vertices. Let \[ I(z,u)=\sum_{n\geq 1}\sum_{k\geq 0} i_{n,k} z^n u^k, \] where $i_{n,k}$ denotes the number of binary plane trees on $n$ vertices with exactly $k$ red vertices, subject to the condition that if a vertex is red, then all of its descendants are red. Recall that $T(z)$ is just the generating function of nonempty binary plane trees. 

Consider again the trees with ideals with their roots colored red, which forces that every other vertex in such colored trees are also red. Each vertex then receives weight $zu$, thus all these cases contribute $T(zu)$ to the bivariate generating function $(z,u)$. On the other hand, if the root is not red, then each of its two subtrees is either empty or is itself a tree with ideals. 
Hence
\begin{align}
\label{eq:funeqIdeal}
I(z,u)=T(zu)+z\big(1+I(z,u)\big)^2.
\end{align}
Solving this quadratic equation for $I(z,u)$ gives \[ I(z,u) = \frac{ 1-2z-\sqrt{1-4z\big(1+T(zu)\big)} }{2z}. \] Using
$1+T(zu) = C(zu)$,
we obtain the explicit form
\[  I(z,u) = \frac{ 1-2z- \sqrt{ 1-\frac{2}{u}+\frac{2}{u}\sqrt{1-4zu} } }{2z}  \]
(as a checkup, setting $u=1$ we recover the univariate generating function
$I(z,1) =I(z)$).
In order to determine the total number of red vertices, we differentiate the functional equation \eqref{eq:funeqIdeal} with respect to $u$. This gives
\[
I_u(z,u) = zT'(zu) + 2z\big(1+I(z,u)\big)I_u(z,u)
\]
and therefore
 \[
 I_u(z,u) = \frac{zT'(zu)} {1-2z\big(1+I(z,u)\big)}.
 \]
 Setting $u=1$, we get \[ 1-2z\big(1+I(z,1)\big) = \sqrt{\,2\sqrt{1-4z}-1\,}.
 \] Also, \[ zT'(z) = \frac{T(z)}{\sqrt{1-4z}}. \] Consequently,
 \[
 	{ I_u(z,1) = \frac{ 1-2z-\sqrt{1-4z} }{ 2z\sqrt{1-4z}\, \sqrt{\,2\sqrt{1-4z}-1\,} }. }
\]
The coefficient $[z^n]I_u(z,1)$ is the total number of red vertices among all admissibly colored binary plane trees on $n$ vertices. This is why $I_u(z,1)$ is often called the {\em cumulative generating function} 
for all red vertices. 
As expected, the dominant singularity of $I_u(z,1)$ is again $\rho=3/16$. 
As we saw in equation \eqref{firstrate}, we have
\[
	2\sqrt{1-4z}-1 \sim \frac{3}{2}\left(1-\frac{z}{\rho}\right)
\]
near $z=\rho$.
Moreover, $T(\rho)=1/3$ and $\sqrt{1-4\rho}=1/2$. Hence
\[
	I_u(z,1) \sim \frac{2}{3}\sqrt{\frac23} \left(1-\frac{z}{\rho}\right)^{-1/2}.
\]
By the Standard Transfer Theorem,
\[
[z^n]I_u(z,1) \sim \frac{2}{3}\sqrt{\frac{2}{3\pi}}\, n^{-1/2} \left(\frac{16}{3}\right)^n.
\]
On the other hand, we know from equation \eqref{eq:asympBinary} that
\[ [z^n]I(z) \sim \frac{4}{\sqrt{6\pi}}\, n^{-3/2} \left(\frac{16}{3}\right)^n.
\]
Therefore, the average number of red vertices in an admissibly colored binary plane tree of size $n$ is asymptotic to
\[
\mathbb{E}(R_n) = \frac{ [z^n]I_u(z,1) }{ [z^n]I(z) } \sim \frac{n}{3},
\]
which is to say that, asymptotically, one third of the vertices are red on average.

\subsection{General theory}
\label{sec:general}

We used the same method to compute the average number of red vertices for the other tree varieties mentioned in the introduction, and we noticed that we always get that the average number of red vertices in all trees of size $n$ is asymptotically $n/3$. This calls for a general method that will prove the same result for a wide class of trees.

\subsubsection{Tools}\label{subsubsec:tools}

Let us first recall the roadmap to obtain expectations.
Suppose that
$
A(z,u)=\sum_{n,k\geq 0} a_{n,k}z^n u^k
$
is a bivariate generating function in which \(z\) marks the size of the object and \(u\) marks the value of some parameter. For fixed \(n\), let
$
a_n(u)=[z^n]A(z,u)=\sum_{k\geq 0} a_{n,k}u^k.
$
Then
$
a_n(1)=\sum_{k\geq 0}a_{n,k}
$
is the total number of objects of size \(n\), while
$
a_n'(1)=\sum_{k\geq 0}k\,a_{n,k}
$
is the total value of the marked parameter over all such objects. Consequently, if \(X_n\) denotes the value of the parameter in an object of size \(n\) chosen uniformly at random, then
\begin{equation}
\label{eq:expectations}
\mathbb E(X_n)
=
\frac{a_n'(1)}{a_n(1)}
=
\left.
\frac{\partial}{\partial u}
\log a_n(u)
\right|_{u=1}.
\end{equation}

Next, we will use two standard analytic tools. The first is the Implicit Function Theorem:

\medskip
\noindent {\bf Implicit Function Theorem (analytic version).}
(See, for example, \cite[Appendix B.5]{FlajoletSedgewick}.)
\emph{
Let $u$ be fixed, and suppose that $F(z,w;u)$ is analytic in $(z,w)$ in a neighborhood of $(z_0,w_0)$. If
\[
F(z_0,w_0;u)=0
\qquad\hbox{and}\qquad
F_w(z_0,w_0;u)\neq 0,
\]
then there is a neighborhood $U$ of $z_0$ and a unique analytic function $w=g(z)$ on $U$ such that $g(z_0)=w_0$ and
\[
F(z,g(z);u)=0
\]
for all $z\in U$.
}
\medskip

We will also use the multivariable form of the first theorem.

\medskip
\noindent {\bf Implicit Function Theorem (multivariable analytic version).}
\emph{
Let $F\colon \mathbb R^d\times\mathbb R\to\mathbb R^d$ be analytic in a
neighborhood of $(y_0,\lambda_0)$. Suppose that
\[
F(y_0,\lambda_0)=0,
\qquad
\det D_yF(y_0,\lambda_0)\neq0,
\]
where $D_yF$ is the Jacobian matrix with respect to the $d$ coordinates
of $y$. Then there are neighborhoods $U$ of $\lambda_0$ and $V$ of
$y_0$ and a unique analytic function $y\colon U\to V$ such that
$y(\lambda_0)=y_0$ and
\[
F(y(\lambda),\lambda)=0\qquad(\lambda\in U).
\]
In particular, $y(\lambda)$ is differentiable.
}
\medskip

The second analytic result is the standard Smooth Implicit-Function Schema, together with its uniform perturbation in an auxiliary marking variable; see \cite[Section VII.4.1 and Theorem VII.3]{FlajoletSedgewick} for the univariate schema and \cite[Section IX.7]{FlajoletSedgewick} for the perturbation with respect to a marking variable. To state it, recall that a combinatorial class is called \emph{periodic} if there exist integers $d>1$ and $r$ such objects of the class occur only in sizes $n\equiv r\pmod d$. Equivalently, if $T(z)=\sum_{n\geq 0} t_n z^n$ is its generating function, then $t_n=0$ unless $n\equiv r\pmod d$. We call the values of $n$ for which $t_n\neq 0$ the \emph{admissible sizes}.

\noindent {\bf Smooth Implicit-Function Schema (parameterized version).}
\emph{
Let $H(z,w,u)$ be analytic in a neighborhood of the points
$(\rho(u),\alpha(u),u)$ for $u$ in a neighborhood of $1$, and suppose that
$Y(z,u)$ is determined implicitly by
\[
Y(z,u)=H\big(z,Y(z,u),u\big).
\]
Assume that there are analytic functions $\rho(u)$ and $\alpha(u)$ that satisfy the characteristic system of equations
\[
\alpha(u)=H\big(\rho(u),\alpha(u),u\big),
\qquad
1=H_w\big(\rho(u),\alpha(u),u\big),
\]
and also satisfy the singular implicit function conditions
\[
H_z\big(\rho(u),\alpha(u),u\big)\neq 0,
\qquad
H_{ww}\big(\rho(u),\alpha(u),u\big)\neq 0.
\]
Assume moreover that, for $u$ near $1$, $\rho(u)$ is the unique dominant
singularity of $Y(z,u)$ and that $Y(z,u)$ admits analytic continuation to a
$\Delta$-domain at $z=\rho(u)$, uniformly in $u$. Then there is an analytic
function $B(u)$, nonzero near $u=1$, such that
\[
Y(z,u)=\alpha(u)-B(u)\left(1-\frac{z}{\rho(u)}\right)^{1/2}
+O\left(1-\frac{z}{\rho(u)}\right),
\]
uniformly for $u$ near $1$. Consequently, there is an analytic function
$K(u)$, nonzero near $u=1$, such that
\[
[z^n]Y(z,u)
=K(u)\rho(u)^{-n}n^{-3/2}
\left(1+O\left(\frac1n\right)\right),
\]
uniformly for $u$ near $1$. In the periodic case, the same conclusion holds
along the values of $n$ that correspond to admissible sizes, with the usual periodic modification.
}

\subsubsection{The general result}\label{subsubsec:generalresult}
 The following theorem provides the general result. 

\begin{theorem}[Quadratic universality]\label{thm:quadratic-universality}
Let $\mathcal T$ be the class of rooted plane trees with
generating function \(T(z)\) determined by the Lagrange functional equation
\begin{equation} \label{charac}
T(z)=z\Phi\big(T(z)\big)
\end{equation}
with a quadratic characteristic function
\[
\Phi(y)=\phi_0+\phi_1y+\phi_2y^2,
\qquad
\phi_0,\phi_2>0,\quad \phi_1\geq 0.
\]
Let $\mathcal I = \mathcal I(\mathcal T)$ be the class of trees with ideals based on $\mathcal T$, which means that $\mathcal I$ consists of all the trees in $\mathcal T$
but now their vertices may be colored red, with the rule that if a vertex is red, then all its descendants are red too. Let
$
A(z,u)=\sum_{n,k\geq 0} a_{n,k}z^n u^k,
$
where \(u\) marks the number of red vertices. Then
\begin{equation} \label{defining}
A(z,u)=T(zu)+z\Phi\big(A(z,u)\big).
\end{equation}
Furthermore, if \(R_n\) denotes the number of red vertices in a uniformly chosen
tree with ideals of size \(n\), then
\[
\mathbb{E}(R_n)=\frac{n}{3}+O(1)
\]
(in particular, $\mathbb{E}(R_n)\sim n/3$).
If the family is periodic, then the assertion is understood as
\(n\to\infty\) through the admissible sizes.
\end{theorem}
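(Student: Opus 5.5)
The proof has two parts: justifying the functional equation \eqref{defining}, and then applying the Smooth Implicit-Function Schema (parameterized version) with $H(z,w,u)=T(zu)+z\Phi(w)$. The mean then falls out of the quasi-powers form $[z^n]A(z,u)=K(u)\rho(u)^{-n}n^{-3/2}(1+O(1/n))$.

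\textbf{The functional equation.} Equation \eqref{defining} is the same root decomposition as \eqref{eq:funeqIdeal}, with $\Phi$ in place of $(1+y)^2$.
\begin{itemize}
\item If the root is red, every vertex is red and each carries weight $zu$, which contributes $T(zu)$.
\item If the root is not red, each term $\phi_j y^j$ of $\Phi$ records $\phi_j$ ways of attaching $j$ subtrees. Each subtree is again an arbitrary tree with ideals, which contributes $z\Phi(A)$.
\end{itemize}

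\textbf{The mean via quasi-powers.} Taking $\partial_u\log$ of the uniform expansion, as in \eqref{eq:expectations}, gives
\[
\mathbb E(R_n)=-\frac{\rho'(1)}{\rho(1)}\,n+\frac{K'(1)}{K(1)}+O(1/n).
\]
So the theorem reduces to proving $-\rho'(1)/\rho(1)=1/3$, and the $O(1)$ error comes for free. The periodic case is handled by the schema's periodic clause.

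\textbf{Checking the hypotheses of the schema.} The characteristic system is
\[
\alpha=T(\rho u)+\rho\Phi(\alpha),\qquad 1=\rho\Phi'(\alpha)=\rho(\phi_1+2\phi_2\alpha).
\]
\begin{itemize}
\item $H_{ww}=2\rho\phi_2>0$, and $H_z=uT'(\rho u)+\Phi(\alpha)>0$.
\item Analyticity of $\rho(u),\alpha(u)$ near $u=1$ follows from the multivariable Implicit Function Theorem, once the Jacobian is seen to be nonzero.
\item One must check that $\rho(1)<\rho_T=1/(\phi_1+2\sqrt{\phi_0\phi_2})$, so that $T(zu)$ is analytic at the singular point. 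The plan is to compare $A(z,1)$ with $T(z)$: since $A\ge T$ coefficientwise, the equation $z\Phi'(A)=1$ is reached before $z\Phi'(T)=1$, and the latter happens exactly at $\rho_T$.
\item Aperiodicity (or its periodic analogue) and the $\Delta$-continuation follow from positivity, as in the standard tree case.
\end{itemize}

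\textbf{Computing $\rho'(1)$.} Differentiate the first characteristic equation in $u$ at $u=1$. The $\alpha'$ terms cancel because $\rho\Phi'(\alpha)=1$, leaving
\[
0=\rho'\bigl(T'(\rho)+\Phi(\alpha)\bigr)+\rho T'(\rho),
\qquad\text{so}\qquad
-\frac{\rho'}{\rho}=\frac{\rho T'(\rho)}{\rho T'(\rho)+\rho\Phi(\alpha)}.
\]
It therefore suffices to prove the identity $\rho\Phi(\alpha)=2\rho T'(\rho)$, with $\rho=\rho(1)$ and $\alpha=\alpha(1)$.

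\textbf{The main obstacle: the identity $\rho\Phi(\alpha)=2\rho T'(\rho)$.} Write $t=T(\rho)$ and use $\rho\phi_1=1-2\rho\phi_2\alpha$.
\begin{itemize}
\item The first characteristic equation simplifies to $t=\rho(\phi_2\alpha^2-\phi_0)$.
\item This also gives $\rho\Phi(\alpha)=\alpha+\rho\phi_0-\rho\phi_2\alpha^2=\alpha-t$.
\item Lagrange inversion of \eqref{charac} gives $\rho T'(\rho)=t/(1-\rho\Phi'(t))=t/(1-\rho\phi_1-2\rho\phi_2t)=t/\bigl(2\rho\phi_2(\alpha-t)\bigr)$.
\end{itemize}
So the target becomes $2\rho\phi_2(\alpha-t)^2=2t$, i.e. $\rho\phi_2(\alpha-t)^2=t$. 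I would verify this by combining $t=\rho\phi_2\alpha^2-\rho\phi_0$ with the equation $\rho\Phi(t)=t$ satisfied by $t$. Explicitly, subtract $t=\rho\phi_0+\rho\phi_1t+\rho\phi_2t^2$ and substitute $\rho\phi_1=1-2\rho\phi_2\alpha$. This should reduce to $0=0$ after expanding $\rho\phi_2(\alpha-t)^2$. If the algebra resists, the fallback is to parametrize everything by $\alpha$ and eliminate $\rho$ directly.
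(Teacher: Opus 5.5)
Your proposal is correct and matches the paper's proof essentially step for step. It uses the same differentiation of the first critical equation to get $-\rho'(1)/\rho=T'(\rho)/\bigl(T'(\rho)+\Phi(\alpha)\bigr)$. It also reduces the key identity $\Phi(\alpha)=2T'(\rho)$ to $\phi_2\rho(\alpha-\tau)^2=\tau$ in the same way, via $\tau=\rho(\phi_2\alpha^2-\phi_0)$, $\rho\Phi(\tau)=\tau$ and $\rho\phi_1=1-2\rho\phi_2\alpha$; your algebra does close, and what you call Lagrange inversion is just implicit differentiation of $T=z\Phi(T)$. The only deviation is your comparison argument for $\rho<\rho_T$ ($A(z,1)>T(z)$ for $z>0$, so $z\Phi'(A)$ reaches $1$ strictly before $z\Phi'(T)$ does at $\rho_T$), which replaces the paper's explicit discriminant analysis in the appendix and works equally well.
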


\begin{proof}
The analytic hypotheses needed for the parameterized Smooth
Implicit-Function Schema are verified in
Appendix~\ref{app:analytic-verification}. Henceforth we assume its consequences when applied.

We now turn to the computation of the asymptotic mean.  For a fixed \(u\) in a neighborhood of \(1\), let \(\rho(u)\) denote the
dominant singularity of \(A(z,u)\), and let us introduce the notation
\[
\alpha(u):=A\big(\rho(u),u\big). 
\]
We call $\alpha(u)$ the {\em critical value} belonging to $u$. 

Let us now take equation \eqref{defining} at $z=\rho(u)$, to get
\begin{equation}
\label{critical1}
	\alpha(u) = T\big(\rho(u)u\big)+\rho(u)\Phi\big(\alpha(u)\big).
\end{equation} 
Now define the multivariate function $F(z, A; u):=A-T(zu)-z\Phi(A)$.
Let us differentiate $F$ with respect to $A$, to get 
\[F_A(z,A;u)=1-z\Phi'(A).\]
Applying the Implicit Function Theorem stated in Subsubsection~\ref{subsubsec:tools} to our implicit equation $F(z,A;u)=0$, if $F_A\neq 0$ at a point on the solution curve, then $A(z,u)$ is an {\em analytic} function of $z$ in a neighborhood of that point.
Therefore, at the singularity $z=\rho(u)$, we must have $F_A=0$. Therefore, the last displayed equation implies
$0=1-\rho(u)\Phi'\big(\alpha(u)\big)$, or
\begin{equation}  
1
=
\rho(u)\Phi'\big(\alpha(u)\big).
\label{critical2}
\end{equation}
It is perhaps easier to keep track of our variables if we remind the reader in words what they denote:
\begin{itemize}
\item for fixed $u$, we denote by $\rho(u)$  the dominant singularity of $A(z,u)$ as a function of $z$,
\item for fixed $u$, we denote by $\alpha(u)$ the {\em value} of $A(z,u)$ at that singularity, and
\item for fixed $u$, we denote by $\tau(u)$ the value $T\big(\rho(u)u\big)$ of the {\em uncolored} generating function $T$ at $\rho(u)u$.
\end{itemize}

Now we will use the critical equations \eqref{critical1} and \eqref{critical2} at the value $u=1$, since
that is the relevant value for the cumulative generating function.
To simplify notation, we   write
\[
\rho=\rho(1),\qquad
\alpha=\alpha(1),\qquad
\tau=T(\rho)=T\big(\rho(1)\big).
\]
This means that
\begin{itemize}
\item we denote by $\rho$  the dominant singularity of $A(z,1)$ as a function of $z$,
\item  we denote by $\alpha$ the {\em value} of $A(z,1)$ at that singularity $\rho$, and
\item  we denote by $\tau$ the value $T(\rho)$ of the {\em uncolored} generating function $T$ at that singularity $\rho$.
\end{itemize}

Now suppose that, uniformly for \(u\) in a neighborhood of \(1\),
\[
[z^n]A(z,u)
=
K(u)\rho(u)^{-n}n^{-3/2}
\left(1+O\left(\frac{1}{n}\right)\right),
\]
where \(K(u)\) is analytic and nonzero near \(u=1\), and \(\rho(u)\) is the dominant singularity of \(A(z,u)\) as a function of \(z\). Taking logarithms gives
\[
\log [z^n]A(z,u)
=
\log K(u)
-
n\log \rho(u)
-
\frac32\log n
+
O\left(\frac{1}{n}\right).
\]
Differentiating with respect to \(u\) and then setting \(u=1\), we obtain
\[
\mathbb E(X_n)
=
\frac{K'(1)}{K(1)}
-
n\frac{\rho'(1)}{\rho(1)}
+
O\left(\frac{1}{n}\right).
\]
Since \(K'(1)/K(1)\) is independent of \(n\), it follows that
\[
\mathbb E(X_n)
=
-n\frac{\rho'(1)}{\rho(1)}
+
O(1).
\]
Thus the asymptotic proportion of marked elements is determined entirely by the logarithmic derivative of the dominant singularity:
\[
{
\frac{\mathbb E(X_n)}{n}
\longrightarrow
-\frac{\rho'(1)}{\rho(1)}.
}
\]
In particular, for the number \(R_n\) of red vertices in the tree families considered below, it will therefore be enough to determine the quantity
$-\rho'(1)/\rho(1)$.
%
%
To this end, it is natural to start with an equation in which both
$\rho'(1)$ and $\rho=\rho(1)$ appear.
Therefore, we differentiate equation \eqref{critical1} with respect to \(u\):
\[
\alpha'(u)
=
T'\big(\rho(u)u\big)
\bigl(\rho(u)+u\rho'(u)\bigr)
+
\rho'(u)\Phi\big(\alpha(u)\big)
+
\rho(u)\Phi'\big(\alpha(u)\big)\alpha'(u).
\]
Let us take this equation at $u=1$.
We get \[\alpha'(1)=T'(\rho)\big(\rho+\rho'(1)\big) + \rho'(1)\Phi(\alpha)
+
\rho\Phi'(\alpha)\alpha'(1).
\]
Note  that
 equation \eqref{critical2} at $u=1$ becomes
\begin{equation}\label{eq:char}
\rho\Phi'(\alpha)=1.
\end{equation}
This 
causes the two terms involving \(\alpha'(1)\) to cancel, yielding
$
0
=
\rho T'(\rho)
+
\rho'(1)
\bigl(T'(\rho)+\Phi(\alpha)\bigr).
$
Rearranging, we obtain
\begin{equation} \label{theratio}
-\frac{\rho'(1)}{\rho}
=
\frac{T'(\rho)}
{T'(\rho)+\Phi(\alpha)}.
\end{equation}

So the ratio $T'(\rho)/\Phi(\alpha)$ is crucial in obtaining the value $-\rho'(1)/\rho$.  We now show how to use equations \eqref{charac}-\eqref{eq:tau} to determine that ratio we are looking for.  The following two special cases of our characteristic equations will be useful:
\begin{itemize}
\item
First, equation \eqref{charac} taken at $z=\rho$ becomes
\begin{equation}
\label{eq:alpha}
	\tau=\rho\Phi(\tau).
\end{equation}
\item
Second, equation \eqref{defining} taken at $z=\alpha$ becomes
\begin{equation}
\label{eq:tau}
	\alpha=\tau+\rho\Phi(\alpha),
\end{equation}
\end{itemize}
Now we can state and prove our result on the ratio $T'(\rho)/\Phi(\alpha)$.

\begin{lemma}\label{lem:phiandt}
At the critical values \(\rho\), \(\alpha\), and \(\tau\), we have
\begin{equation} \label{phiandt}
\Phi(\alpha)=2T'(\rho).
\end{equation}
\end{lemma}

\begin{proof}[Proof of Lemma \ref{lem:phiandt}]
Differentiating the characteristic equation \eqref{charac}
with respect to $z$ and rearranging, we get
\[
T'(z)=
\frac{\Phi(T(z))}
{1-z\Phi'(T(z))}.
\]
Setting $z=\rho$, and remembering that $T(\rho)=\tau$, we get
\begin{equation} \label{formulafort'}
T'(\rho)=
\frac{\Phi(\tau)}
{1-\rho\Phi'(\tau)}.
\end{equation}
Note that up to this point, we have not used the fact that the branching function $\Phi$  in our functional equation \eqref{charac} was quadratic. Now we will use that crucial property of $\Phi$.
Since \(\Phi\) is quadratic,
\begin{equation} \label{quadratic}
\Phi'(y)=\phi_1+2\phi_2y.
\end{equation}
Using equation~\eqref{eq:char}, we therefore have
\[
1-\rho\Phi'(\tau)
=\rho\Phi'(\alpha) - \rho \Phi'(\tau)=
\rho\bigl(\Phi'(\alpha)-\Phi'(\tau)\bigr)
=
2\phi_2\rho(\alpha-\tau).
\] The last equality simply follows by setting first $y=\alpha$, then $y=\tau$ in equation~\eqref{quadratic}.
Also, equation~\eqref{eq:alpha} implies
\[
\Phi(\tau)=\frac{\tau}{\rho}.
\]
Now note that the last two displayed equations provide expressions for the denominator and the numerator of equation \eqref{formulafort'},
yielding
\begin{equation}
T'(\rho)
=
\frac{\tau}
{2\phi_2\rho^2(\alpha-\tau)}.
\label{eq:Tprime}
\end{equation}

Next, we further simplify the denominator by  showing that
\begin{equation}
\phi_2\rho(\alpha-\tau)^2=\tau.
\label{eq:key}
\end{equation}
Indeed, from equation \eqref{eq:tau},
\begin{equation} \label{newtau}
\tau=\alpha-\rho\Phi(\alpha).
\end{equation}
Using equation \eqref{eq:char} and then equation \eqref{quadratic} with $y=\alpha$, we get
\begin{equation}
1=\rho\Phi'(\alpha)=\rho(\phi_1+2\phi_2\alpha),
\end{equation}
and multiplying by $\alpha$ yields
\begin{equation}
	\alpha=\rho(\phi_1 \alpha+2\phi_2\alpha^2).
\end{equation}
Substituting this expression for $\alpha$ to equation \eqref{newtau} results in
\begin{align}
\tau
&=
\rho(\phi_1 \alpha+2\phi_2\alpha^2)-\rho
\bigl(
\phi_0+\phi_1\alpha+\phi_2\alpha^2
\bigr)
=
\rho\bigl(\phi_2\alpha^2-\phi_0\bigr).
\end{align}
Thus
\begin{equation}
\phi_2\alpha^2
=
\phi_0+\frac{\tau}{\rho}.
\label{eq:a1}
\end{equation}
On the other hand, multiplying equation \eqref{eq:char} by \(\tau\) gives
\[
\frac{\tau}{\rho}
=
\phi_1\tau+2\phi_2\alpha\tau.
\]
But equation \eqref{eq:alpha} gives
\[
\frac{\tau}{\rho}
=
\phi_0+\phi_1\tau+\phi_2\tau^2.
\]
Comparing the right-hand sides of the last two equations implies
\begin{equation}
2\phi_2\alpha\tau
=
\phi_0+\phi_2\tau^2.
\label{eq:a2}
\end{equation}
Subtracting equation \eqref{eq:a2} from equation \eqref{eq:a1} and adding $\phi_2 \tau^2$ we get
\begin{align}
\phi_2(\alpha-\tau)^2
&=
\phi_2\alpha^2
-2\phi_2\alpha\tau
+\phi_2\tau^2\\
&=
\left(\phi_0+\frac{\tau}{\rho}\right)
-\left(\phi_0+\phi_2\tau^2\right)
+\phi_2\tau^2\\
&=
\frac{\tau}{\rho}.
\end{align}
This proves equation \eqref{eq:key}.

Substituting equation \eqref{eq:key} into the denominator of $T'(\rho)$ in equation \eqref{eq:Tprime}, we obtain
\[
T'(\rho)
=
\frac{\alpha-\tau}{2\rho}.
\]
For the numerator of  $T'(\rho)$ in equation \eqref{eq:Tprime}, note that by equation \eqref{eq:tau}, we have
\[
\frac{\alpha-\tau}{\rho}=\Phi(\alpha).
\]
Hence, comparing the last two displayed equations, 
$
T'(\rho)=\frac{1}{2}\Phi(\alpha),
$
which proves equation \eqref{phiandt}.
\end{proof}

Continuing the proof of Theorem \ref{thm:quadratic-universality}, we observe that
the formula in equation \eqref{theratio} and the equality in equation \eqref{phiandt} that we have just proved, that is, $\Phi(\alpha)=2T'(\rho)$, immediately imply
\[
-\frac{\rho'(1)}{\rho}
=
\frac{1}{3}.
\]
Finally, the Smooth Implicit-Function Schema stated in Subsubsection~\ref{subsubsec:tools} gives, uniformly for
\(u\) near \(1\),
\[
[z^n]A(z,u)
=
K(u)\rho(u)^{-n}n^{-3/2}
\left(1+O\left(\frac{1}{n}\right)\right),
\]
where \(K(u)\) is analytic and nonzero near \(u=1\). Therefore
\[
\mathbb{E}(R_n)
=
\left.
\frac{\partial}{\partial u}
\log [z^n]A(z,u)
\right|_{u=1}
=
-n\frac{\rho'(1)}{\rho}+O(1).
\]
Since $-\rho'(1)/\rho=1/3$,
we conclude that
\[
\mathbb{E}(R_n)
=
\frac{n}{3}+O(1).
\]
In particular, $\mathbb{E}(R_n)\sim n/3$.
\end{proof}

Notice that nowhere in the proof was it assumed that
\(\phi_2=1\). Thus the conclusion depends on the fact that
\(\Phi\) is quadratic, not on the quadratic being monic.  We will see that this is why our theorem applies
for 0-1-trees.

\begin{example}
For binary plane trees, the branching function is
$
\Phi(y)=(1+y)^2
$ 
(see equation \eqref{eq:branchEqBinary}).
As computed in Section \ref{sec:ExampleRed}, the dominant singularity of the generating function for admissibly colored trees is
\[
\rho=\frac{3}{16}.
\]
The critical value \(\alpha\) is determined by the characteristic equation
\[
1=\rho\Phi'(\alpha).
\]
Since
$
\Phi'(y)=2(1+y),
$
we obtain
\[
1=\frac{3}{16}\,2(1+\alpha),
\]
and therefore
\[
\alpha=\frac{5}{3}.
\]
Finally,
\[
\tau=T(\rho).
\]
For binary plane trees,
\[
T(z)=\frac{1-2z-\sqrt{1-4z}}{2z},
\]
so
\[
\tau=T\left(\frac{3}{16}\right)
=\frac{1-\frac{3}{8}-\frac12}{\frac{3}{8}}
=\frac13.
\]
Thus, in this example,
\[
{\rho=\frac{3}{16},\qquad \alpha=\frac53,\qquad \tau=\frac13.}
\]
\end{example}

\subsection{Applications of Theorem \ref{thm:quadratic-universality}}
\label{subsec:appIdeal}

We now apply Theorem \ref{thm:quadratic-universality} to several of the tree families mentioned in the Introduction. In each case, the variable $z$ marks vertices and the variable $u$ marks red vertices. Three of the four families below are direct applications of the theorem. Rooted plane trees are included for comparison; their root decomposition is not quadratic, and in fact their limiting proportion of red vertices is different.

\begin{enumerate}

\item \emph{Plane 1-2 trees.}
A plane 1-2 tree is a rooted plane tree in which every nonleaf vertex has either one or two children. If a vertex has two children, their order matters, but an only child has no left-right designation. Let $T_{1,2}(z)$ denote the generating function for nonempty plane 1-2 trees. Decomposing at the root gives
\[
T_{1,2}(z)=z\bigl(1+T_{1,2}(z)+T_{1,2}(z)^2\bigr).
\]
If $I_{1,2}(z,u)$ denotes the corresponding bivariate generating function for admissibly red-colored trees, then
\[
I_{1,2}(z,u)
=
T_{1,2}(zu)
+z\bigl(1+I_{1,2}(z,u)+I_{1,2}(z,u)^2\bigr).
\]
Thus Theorem \ref{thm:quadratic-universality} applies with
\[
\Phi(y)=1+y+y^2,
\]
and therefore
\[
\mathbb E(R_n)=\frac{n}{3}+O(1),
\qquad
\mathbb E(R_n)\sim \frac{n}{3}.
\]

\item \emph{Rooted plane trees.}
A rooted plane tree is a rooted tree in which the children of each vertex are linearly ordered; there is no restriction on the number of children. Let $T_{\rm rp}(z)$ be the generating function for nonempty rooted plane trees. Since the children of the root form an arbitrary sequence of rooted plane trees,
\[
T_{\rm rp}(z)
=
\frac{z}{1-T_{\rm rp}(z)}.
\]
Accordingly, the bivariate generating function for admissibly red-colored rooted plane trees satisfies
\[
I_{\rm rp}(z,u)
=
T_{\rm rp}(zu)
+
\frac{z}{1-I_{\rm rp}(z,u)}.
\]
Here the root-decomposition function is
$
\Phi(y)=\frac{1}{1-y},
$
which is not quadratic. Hence Theorem~\ref{thm:quadratic-universality} does not apply to this family. In fact, this example shows that the quadratic hypothesis is essential for the constant $1/3$.

For completeness, let $\rho(u)$ be the dominant singularity of $I_{\rm rp}(z,u)$ and put
\[
\alpha(u)=I_{\rm rp}\big(\rho(u),u\big).
\]
At $u=1$, write $\rho=\rho(1)$, $\alpha=\alpha(1)$, and $\tau=T_{\rm rp}(\rho)$. The characteristic equations give
\[
\tau=\frac{\rho}{1-\tau},
\qquad
\alpha=\tau+\frac{\rho}{1-\alpha},
\qquad
1=\frac{\rho}{(1-\alpha)^2}.
\]
These equations yield
\[
\tau=\frac15,
\qquad
\rho=\frac{4}{25},
\qquad
\alpha=\frac35.
\]
Differentiating the first critical equation with respect to $u$ gives
\[
-\frac{\rho'(1)}{\rho}
=
\frac{T_{\rm rp}'(\rho)}
{T_{\rm rp}'(\rho)+1/(1-\alpha)}.
\]
Since
\[
T_{\rm rp}'(\rho)=\frac53
\qquad\hbox{and}\qquad
\frac{1}{1-\alpha}=\frac52,
\]
we obtain
\[
-\frac{\rho'(1)}{\rho}=\frac25.
\]
Thus, by the same singularity argument used above,
\[
\mathbb E(R_n)=\frac{2n}{5}+O(1),
\qquad
\mathbb E(R_n)\sim\frac{2n}{5}.
\]

The same two-coloring of rooted plane trees was considered by Dimitrov,
Fox, Hadaway, Tharp, and Wagner~\cite{DimitrovFoxHadawayTharpWagner},
who studied its enumeration within a general framework for local
parent--child coloring rules, rather than the asymptotic size of the red
set considered here.

\item \emph{Plane 2-trees.}
A plane 2-tree is a rooted plane tree in which every nonleaf vertex has exactly two children. Let $T_2(z)$ denote the generating function for nonempty plane 2-trees. A root is either a leaf or has two ordered children, so
\[
T_2(z)=z\bigl(1+T_2(z)^2\bigr).
\]
The corresponding bivariate generating function is
\[
I_2(z,u)
=
T_2(zu)+z\bigl(1+I_2(z,u)^2\bigr).
\]
Thus Theorem \ref{thm:quadratic-universality} applies with
$
\Phi(y)=1+y^2.
$ 
This family is periodic: only odd sizes occur. Therefore, as $n\to\infty$ through odd values,
\[
\mathbb E(R_n)=\frac{n}{3}+O(1),
\qquad
\mathbb E(R_n)\sim\frac{n}{3}.
\]

\item \emph{0-1-trees.}
A 0-1-tree is a binary plane tree in which every vertex having two children is additionally labeled either $0$ or $1$. Thus a root is a leaf, has one child in either the left or the right position, or has two children and one of two possible labels. If $T_{01}(z)$ denotes the generating function for nonempty 0-1-trees, then
\[
T_{01}(z)
=
z\bigl(1+2T_{01}(z)+2T_{01}(z)^2\bigr).
\]
The bivariate generating function for admissibly red-colored 0-1-trees therefore satisfies
\[
I_{01}(z,u)
=
T_{01}(zu)
+z\bigl(1+2I_{01}(z,u)+2I_{01}(z,u)^2\bigr).
\]
Hence Theorem \ref{thm:quadratic-universality} applies with
$
\Phi(y)=1+2y+2y^2,
$
and we obtain
\[
\mathbb E(R_n)=\frac{n}{3}+O(1),
\qquad
\mathbb E(R_n)\sim\frac{n}{3}.
\]

\end{enumerate}

\section{The average size of antichains}
\label{sec:antichains}

We now turn to antichains.  As in the Introduction, we represent an antichain by coloring its vertices blue.  Thus no two blue vertices are comparable; equivalently, no blue vertex is an ancestor or a descendant of another blue vertex.

\subsection{An example: Binary plane trees}
\label{sec:binaryBlue}

We begin with binary plane trees, in direct analogy with Section \ref{sec:ExampleRed}.  Let
\[
B(z,u)=\sum_{n\geq 1}\sum_{k\geq 0} b_{n,k}z^n u^k,
\]
where $b_{n,k}$ denotes the number of binary plane trees on $n$ vertices having an antichain of exactly $k$ blue vertices.  Thus $z$ marks all vertices and $u$ marks blue vertices. See Figure \ref{fig:BinaryPlaneBlue}.

\begin{figure}[h] 
   \centering
   \includegraphics[width=6in]{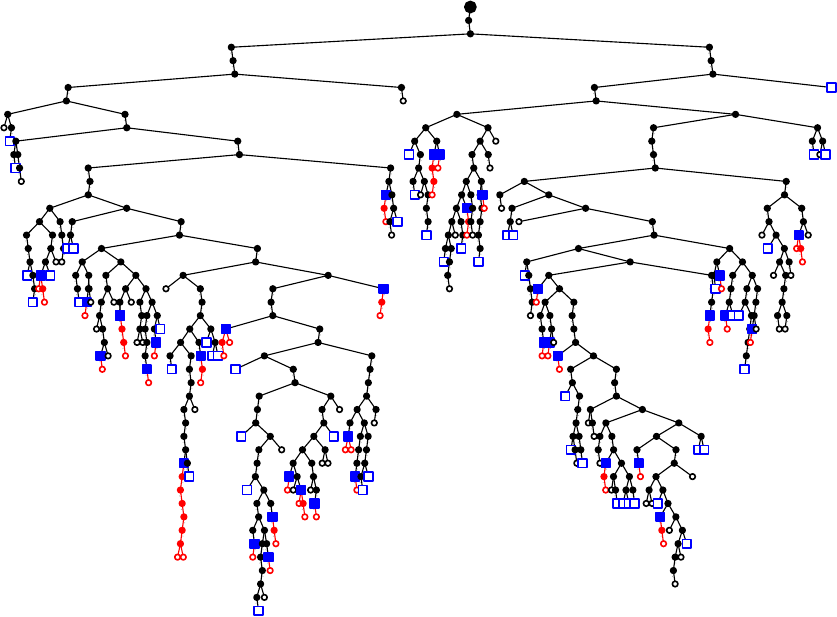} 
   \caption{A random tree with antichains of size $n=500$, it has $83 \approx 500/6 = 83.33\ldots$ blue vertices, each of which defines an ideal by declaring it as the maximal element of such ideal, so the rest of nonmaximal elements in these ideals are colored red.}
   \label{fig:BinaryPlaneBlue}
\end{figure}

Consider the root.  If the root is blue, then no other vertex of the tree can be blue.  After forgetting the color of the root, the underlying tree is an arbitrary nonempty binary plane tree, so this case contributes
$u T(z)$.
If the root is not blue, then each of its two subtrees is either empty or is itself a binary plane tree with an admissible set of blue vertices.  Moreover, blue vertices lying in different subtrees are automatically incomparable.  Hence
\[
{
B(z,u)=uT(z)+z\big(1+B(z,u)\big)^2.
}
\]
Solving the quadratic equation and choosing the branch satisfying $B(0,u)=0$ gives
\[
B(z,u)=
\frac{1-2z-\sqrt{1-4z\big(1+uT(z)\big)}}{2z}.
\]

Setting $u=1$, we obtain
\begin{align}
\label{eq:funcB}
B(z,1)=T(z)+z\big(1+B(z,1)\big)^2.
\end{align}
This is exactly the same functional equation, with the same initial condition, as the equation for $I(z)$ in Section~\ref{sec:ExampleRed}.  Therefore
$
B(z,1)=I(z).
$
This equality also reflects the usual bijection between antichains and ideals in a finite poset.  In particular, from Section~\ref{sec:ExampleRed},
\[
[z^n]B(z,1)
\sim
\frac{4}{\sqrt{6\pi}}\,n^{-3/2}
\left(\frac{16}{3}\right)^n.
\]

To determine the total number of blue vertices, differentiate the functional equation \eqref{eq:funcB} with respect to $u$.  We get
\[
B_u(z,u)
=
T(z)+2z\big(1+B(z,u)\big)B_u(z,u),
\]
and hence
\[
B_u(z,u)
=
\frac{T(z)}{1-2z\big(1+B(z,u)\big)}.
\]
At $u=1$, since $B(z,1)=I(z)$, the denominator is the same one that appeared in Section~\ref{sec:ExampleRed}.  Thus
\[
1-2z\big(1+B(z,1)\big)
=
\sqrt{\,2\sqrt{1-4z}-1\,},
\]
so
\[
{
B_u(z,1)
=
\frac{T(z)}{\sqrt{\,2\sqrt{1-4z}-1\,}}.
}
\]
The coefficient $[z^n]B_u(z,1)$ is the total number of blue vertices among all admissibly blue-colored binary plane trees on $n$ vertices.

The dominant singularity is again $\rho=3/16$.
By equation \eqref{firstrate}, as $z\to\rho$,
\[
2\sqrt{1-4z}-1
\sim
\frac32\left(1-\frac{z}{\rho}\right),
\]
and, as before,
$T(\rho)=1/3$.
Consequently,
\[
B_u(z,1)
\sim
\frac13\sqrt{\frac23}
\left(1-\frac{z}{\rho}\right)^{-1/2}.
\]
By the Standard Transfer Theorem \cite[Corollary VI.1]{FlajoletSedgewick},
\[
[z^n]B_u(z,1)
\sim
\frac13\sqrt{\frac{2}{3\pi}}\,
 n^{-1/2}\left(\frac{16}{3}\right)^n.
\]
Therefore the average number of blue vertices among all admissibly blue-colored binary plane trees of size $n$ is asymptotic to
\[
\frac{[z^n]B_u(z,1)}{[z^n]B(z,1)}
\sim
\frac{
\frac13\sqrt{\frac{2}{3\pi}}\,n^{-1/2}
}{
\frac{4}{\sqrt{6\pi}}\,n^{-3/2}
}
=
\frac{n}{6}.
\]
Thus
\[
{
\mathbb E(A_n)\sim \frac{n}{6},
}
\]
where $A_n$ denotes the number of vertices in the chosen antichain.  In particular, asymptotically one sixth of the vertices are blue on average.

\subsection{A general formula}

We now derive a general formula for the asymptotic average number of blue
vertices. Unlike the corresponding result for ideals, the value of the
limiting proportion will in general depend on the tree family.

Let $\Phi$ be the branching function of a family of rooted trees, so that
the generating function $T(z)$ for nonempty uncolored trees satisfies
\begin{equation}\label{blueT}
T(z)=z\Phi\big(T(z)\big).
\end{equation}
Let
\[
B(z,u)=\sum_{n,k\geq 0} b_{n,k}z^n u^k,
\]
where $u$ marks blue vertices, and the blue vertices are required to form an
antichain.  If the root is blue, then no other vertex can be blue, and the
underlying tree can be arbitrary.  This case contributes $uT(z)$.  If the
root is not blue, then each branch may carry an admissible set of blue
vertices independently.  Consequently,
\begin{equation}\label{bluedefining}
B(z,u)=uT(z)+z\Phi\big(B(z,u)\big).
\end{equation}

The following result gives the asymptotic mean whenever the parameterized
Smooth Implicit-Function Schema of Subsubsection~\ref{subsubsec:tools}
applies to equation \eqref{bluedefining}.

\begin{theorem}[General antichain formula]\label{thm:bluegeneral}
Suppose that $T$ and $B$ are defined by equations \eqref{blueT} and
\eqref{bluedefining}, respectively, and that equation \eqref{bluedefining} satisfies the hypotheses
of the parameterized Smooth Implicit-Function Schema near $u=1$.  Let $\rho$
be the dominant singularity of $B(z,1)$, and put
\[
\alpha=B(\rho,1),\qquad \tau=T(\rho).
\]
If $X_n$ denotes the number of blue vertices in a uniformly chosen
admissibly blue-colored tree of size $n$, then
\[
\mathbb E(X_n)=\mu n+O(1),
\]
where
\begin{equation}\label{blueconstant}
{
\mu=
\frac{\tau}
{\rho\bigl(T'(\rho)+\Phi(\alpha)\bigr)}.
}
\end{equation}
In particular,
\[
\mathbb E(X_n)\sim \mu n.
\]
In the periodic case, the assertion is understood as $n\to\infty$ through
the admissible sizes.
\end{theorem}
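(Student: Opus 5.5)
The plan mirrors the proof of Theorem~\ref{thm:quadratic-universality}, now with the marking variable $u$ attached to $T(z)$ as a factor rather than entering through its argument. Since equation \eqref{bluedefining} is assumed to satisfy the hypotheses of the parameterized Smooth Implicit-Function Schema, I will take as given that, uniformly for $u$ near $1$,
\[
[z^n]B(z,u)=K(u)\rho(u)^{-n}n^{-3/2}\left(1+O\left(\frac1n\right)\right)
\]
with $K$ analytic and nonzero near $1$, and that $\rho(u)$ is analytic. Taking logarithms, differentiating in $u$ at $u=1$, and using equation \eqref{eq:expectations} exactly as before gives $\mathbb E(X_n)=-n\rho'(1)/\rho+O(1)$. (The uniformity of the error term is what permits differentiation; by analyticity in $u$ this follows from Cauchy's estimate on a small circle around $u=1$.) So the whole task reduces to showing $-\rho'(1)/\rho=\mu$.

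Define $\alpha(u)=B(\rho(u),u)$. The characteristic system of the schema, with $H(z,w,u)=uT(z)+z\Phi(w)$, gives two equations:
\[
\alpha(u)=uT\big(\rho(u)\big)+\rho(u)\Phi\big(\alpha(u)\big),\qquad 1=\rho(u)\Phi'\big(\alpha(u)\big).
\]
I will differentiate the first equation with respect to $u$:
\[
\alpha'=T(\rho(u))+uT'(\rho(u))\rho'(u)+\rho'(u)\Phi(\alpha)+\rho\Phi'(\alpha)\alpha'.
\]
Setting $u=1$ and applying the second equation, $\rho\Phi'(\alpha)=1$, cancels the $\alpha'(1)$ terms. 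What remains is
\[
0=\tau+\rho'(1)\bigl(T'(\rho)+\Phi(\alpha)\bigr),
\]
and hence
\[
-\frac{\rho'(1)}{\rho}=\frac{\tau}{\rho\bigl(T'(\rho)+\Phi(\alpha)\bigr)}=\mu.
\]
Here $T'(\rho)$ is well defined because $\rho<\rho_T$, the singularity of $T$: the singularity of $B(z,1)$ arises from the implicit-function breakdown, which requires $T$ to be analytic at $\rho$. The denominator is positive since all the quantities involved are positive.

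The main obstacle is justifying that $\rho(u)$ and $\alpha(u)$ are differentiable and actually satisfy the characteristic system. This is supplied by the hypothesis, via the multivariable Implicit Function Theorem applied to the $2\times2$ system above; its Jacobian is nonsingular thanks to $H_z\neq0$ and $H_{ww}\neq0$. Beyond that, only bookkeeping is needed. As a sanity check, for binary trees $\rho=3/16$, $\tau=1/3$, $\alpha=5/3$, $\Phi(\alpha)=64/9$, and $T'(\rho)=T(\rho)/(\rho\sqrt{1-4\rho})=32/9$. This gives $\mu=(1/3)/\bigl((3/16)(96/9)\bigr)=1/6$, matching Section~\ref{sec:binaryBlue}. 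The periodic case is handled by restricting $n$ to admissible sizes, as in the schema.
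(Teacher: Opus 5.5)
Your proposal is correct and follows essentially the same route as the paper. The paper likewise differentiates $\alpha(u)=uT(\rho(u))+\rho(u)\Phi(\alpha(u))$ at $u=1$, cancels the $\alpha'(1)$ terms using $\rho\Phi'(\alpha)=1$, and invokes the roadmap $\mathbb E(X_n)=-n\rho'(1)/\rho+O(1)$. Your extra remarks (Cauchy's estimate for differentiating the uniform error term, the Jacobian $H_zH_{ww}\neq0$, and the binary-tree check giving $\mu=1/6$) are accurate and fill in points the paper leaves implicit.
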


The analytic hypotheses in the statement of Theorem~\ref{thm:bluegeneral}
will be verified in Appendix~\ref{app:analytic-verification}.  As we will
see there, the verification is essentially the same as for
Theorem~\ref{thm:quadratic-universality}.

\begin{proof}
For fixed $u$ near $1$, let $\rho(u)$ denote the dominant singularity of
$B(z,u)$ as a function of $z$, and put
\[
\alpha(u)=B\big(\rho(u),u\big).
\]
Taking equation \eqref{bluedefining} at $z=\rho(u)$ gives the first critical equation
\begin{equation}\label{bluecritical1}
\alpha(u)
=
u T\big(\rho(u)\big)+\rho(u)\Phi\big(\alpha(u)\big).
\end{equation}
The second critical equation is obtained exactly as in Section \ref{sec:general}.  At the
singularity the derivative with respect to the implicit variable must vanish,
so
\begin{equation}\label{bluecritical2}
1=\rho(u)\Phi'\big(\alpha(u)\big).
\end{equation}
At $u=1$, write
\[
\rho=\rho(1),\qquad \alpha=\alpha(1),\qquad \tau=T(\rho).
\]
Notice that setting $u=1$ in equation \eqref{bluedefining} also gives
\[
B(z,1)=T(z)+z\Phi\big(B(z,1)\big),
\]
which is the same univariate equation as the one obtained for ideals
(see equation \eqref{eq:feidealUni}). Thus,
for a fixed tree family, the quantities $\rho$, $\alpha$, and $\tau$ are the
same as those occurring in Section \ref{sec:general}.

Differentiate equation \eqref{bluecritical1} with respect to $u$.  We obtain
\[
\alpha'(u)
=
T\big(\rho(u)\big)
+uT'\big(\rho(u)\big)\rho'(u)
+\rho'(u)\Phi\big(\alpha(u)\big)
+\rho(u)\Phi'\big(\alpha(u)\big)\alpha'(u).
\]
Setting $u=1$ and using equation \eqref{bluecritical2}, the two terms involving
$\alpha'(1)$ cancel.  Therefore
\[
0
=
\tau
+\rho'(1)\bigl(T'(\rho)+\Phi(\alpha)\bigr).
\]
Hence
\[
-\frac{\rho'(1)}{\rho}
=
\frac{\tau}
{\rho\bigl(T'(\rho)+\Phi(\alpha)\bigr)}.
\]

The roadmap in Section \ref{sec:general} shows that, whenever the parameterized Smooth
Implicit-Function Schema applies, the linear coefficient in the expectation
of a marked parameter is $-\rho'(1)/\rho$. Consequently,
\[
\mathbb E(X_n)
=
-n\frac{\rho'(1)}{\rho}+O(1)
=
\frac{\tau}
{\rho\bigl(T'(\rho)+\Phi(\alpha)\bigr)}\,n+O(1),
\]
which proves equation \eqref{blueconstant}.
\end{proof}

For the quadratic families of Theorem \ref{thm:quadratic-universality}, the formula in equation
\eqref{blueconstant} simplifies further.  Indeed, we proved in
equation \eqref{phiandt} that $\Phi(\alpha)=2T'(\rho)$.
Therefore
\begin{equation}\label{bluequadratic1}
\mu=\frac{\tau}{3\rho T'(\rho)}.
\end{equation}
On the other hand, differentiating $T=z\Phi(T)$ gives
\[
T'(\rho)
=
\frac{\Phi(\tau)}{1-\rho\Phi'(\tau)}
=
\frac{\tau}{\rho\bigl(1-\rho\Phi'(\tau)\bigr)}.
\]
Substituting this into equation \eqref{bluequadratic1} yields the particularly simple
formula
\begin{equation}\label{bluequadratic2}
{
\mu=\frac{1-\rho\Phi'(\tau)}{3}.
}
\end{equation}
Thus, even among quadratic tree families, the asymptotic proportion of blue
vertices need not be universal; it depends on the values of $\rho$ and
$\tau$ for the particular family.

As a check, for binary plane trees we have
\[
\Phi(y)=(1+y)^2,
\qquad
\rho=\frac{3}{16},
\qquad
\tau=\frac13.
\]
Hence
\[
\Phi'(\tau)=2\left(1+\frac13\right)=\frac83,
\]
and equation \eqref{bluequadratic2} gives
\[
\mu
=
\frac{1-\frac{3}{16}\cdot\frac83}{3}
=
\frac16.
\]
Thus Theorem~\ref{thm:bluegeneral} recovers the result of Section \ref{sec:binaryBlue}:
\[
\mathbb E(X_n)\sim\frac{n}{6}.
\]

\subsection{Applications of the general formula}

We now apply the general formula of Theorem~\ref{thm:bluegeneral} to the
same tree families considered in Subsection~\ref{subsec:appIdeal}.  Binary plane trees were
already treated in Subsection \ref{sec:binaryBlue}, where we found that the average number of
blue vertices is asymptotic to $n/6$.

For the quadratic families it is convenient to record a short computational
form of equation \eqref{bluequadratic2}.  Write
\[
\Phi(y)=a+by+cy^2,
\qquad a,c>0,\quad b\geq 0.
\]
Since
\[
\rho=\frac{\tau}{\Phi(\tau)},
\]
equation \eqref{bluequadratic2} becomes
\begin{equation}\label{bluequadratic3}
\mu
=
\frac{\Phi(\tau)-\tau\Phi'(\tau)}{3\Phi(\tau)}
=
\frac{a-c\tau^2}{3(a+b\tau+c\tau^2)}.
\end{equation}
The value of $\tau$ can also be determined directly from the quadratic
coefficients.  Indeed, by equation~\eqref{eq:key},
\[
c\rho(\alpha-\tau)^2=\tau,
\]
while the characteristic equation $1=\rho\Phi'(\alpha)$ gives
\[
\alpha-\tau
=
\frac{1-\rho(b+2c\tau)}{2c\rho}.
\]
Using $\rho=\tau/\Phi(\tau)$ and simplifying, we obtain
\begin{equation}\label{tauequation}
3c^2\tau^4+4bc\tau^3+6ac\tau^2-a^2=0.
\end{equation}
The left-hand side is strictly increasing for $\tau>0$, so
equation \eqref{tauequation} has a unique positive root.  Thus
equations \eqref{bluequadratic3} and \eqref{tauequation} give a direct way to compute
$\mu$ for every quadratic family in Subsection~\ref{subsec:appIdeal}.

\begin{enumerate}

\item \emph{Plane 1-2 trees.}
Here
\[
\Phi(y)=1+y+y^2,
\]
so $a=b=c=1$.  Equation \eqref{tauequation} becomes
\[
3\tau^4+4\tau^3+6\tau^2-1=0.
\]
Its unique positive root is
$
\tau\approx 0.3577190291.
$
Therefore, by equation \eqref{bluequadratic3},
\[
\mu
=
\frac{1-\tau^2}{3(1+\tau+\tau^2)}
\approx 0.1956536090.
\]
Consequently,
\[
\mathbb E(X_n)
=
0.1956536090\,n+O(1),
\qquad
\mathbb E(X_n)\sim 0.1956536090\,n.
\]

\item \emph{Rooted plane trees.}
For rooted plane trees,
\[
\Phi(y)=\frac{1}{1-y},
\]
so this case is not quadratic and we use the general formula in equation
\eqref{blueconstant}.  As computed in Subsection \ref{subsec:appIdeal},
\[
\tau=\frac15,
\qquad
\rho=\frac4{25},
\qquad
\alpha=\frac35,
\qquad
T_{\rm rp}'(\rho)=\frac53.
\]
Also,
\[
\Phi(\alpha)=\frac{1}{1-\alpha}=\frac52.
\]
Hence
\[
\mu
=
\frac{\frac15}
{\frac4{25}\left(\frac53+\frac52\right)}
=
\frac3{10}.
\]
Thus
\[
\mathbb E(X_n)=\frac{3n}{10}+O(1),
\qquad
\mathbb E(X_n)\sim\frac{3n}{10}.
\]

\item \emph{Plane 2-trees.}
Here
\[
\Phi(y)=1+y^2,
\]
so $a=c=1$ and $b=0$.  Equation \eqref{tauequation} gives
\[
3\tau^4+6\tau^2-1=0.
\]
Thus
\[
\tau^2=\frac{2\sqrt3-3}{3}.
\]
Equation \eqref{bluequadratic3} now gives
\[
\mu
=
\frac{1-\tau^2}{3(1+\tau^2)}
=
\frac{\sqrt3-1}{3}.
\]
This family is periodic, with only odd sizes occurring.  Therefore, as
$n\to\infty$ through odd values,
\[
\mathbb E(X_n)
=
\frac{\sqrt3-1}{3}\,n+O(1),
\qquad
\mathbb E(X_n)
\sim
\frac{\sqrt3-1}{3}\,n.
\]
Numerically, the limiting proportion is approximately $0.2440169359$.

\item \emph{0-1-trees.}
For this family,
\[
\Phi(y)=1+2y+2y^2,
\]
so $a=1$, $b=2$, and $c=2$.  Equation \eqref{tauequation} becomes
\[
12\tau^4+16\tau^3+12\tau^2-1=0.
\]
Its unique positive root is
$
\tau\approx 0.2451231177.
$
Hence
\[
\mu
=
\frac{1-2\tau^2}{3(1+2\tau+2\tau^2)}
\approx 0.1821121191.
\]
Therefore
\[
\mathbb E(X_n)
=
0.1821121191\,n+O(1),
\qquad
\mathbb E(X_n)\sim 0.1821121191\,n.
\]

\end{enumerate}

These examples illustrate the contrast with ideals.  For the quadratic
families in Subsection \ref{subsec:appIdeal}, the average number of red vertices always has
limiting proportion $1/3$, whereas the limiting proportion of blue vertices
depends on the branching function $\Phi$.

\section{Bounded outdegree and monotonicity of red and blue proportions}
\label{sec:bounded-outdegree}

\subsection{Red vertices}

We now interpolate between plane 1--2 trees and unrestricted rooted plane trees.  For an integer $k\ge 2$, let a \emph{$\leq k$-tree} be a rooted plane tree in which every nonleaf vertex has at most $k$ children.  Equivalently, its branching function is
$
\Phi_k(x)=1+x+\cdots+x^k.
$
Let $T_k(z)$ be the generating function for nonempty $\leq k$-trees, so that
$
T_k(z)=z\Phi_k\big(T_k(z)\big),
$
and let $A_k(z,u)$ be the corresponding bivariate generating function for admissibly red-colored trees, where $u$ marks red vertices.  Then
\[
A_k(z,u)=T_k(zu)+z\Phi_k\big(A_k(z,u)\big).
\]
Write $R_{n,k}$ for the number of red vertices in a uniformly chosen admissibly colored $\leq k$-tree of size $n$.  Standard singularity analysis, of the same type used in Subsection~\ref{sec:general} ---see in particular the final application of the Smooth Implicit-Function Schema in the proof of Theorem~\ref{thm:quadratic-universality}---, gives
\[
\mathbb E(R_{n,k})=c_k n+O(1)
\]
for a constant $c_k>0$.  The purpose of this section is to show that these constants increase strictly with $k$ and converge to the unrestricted rooted-plane-tree constant. We then prove the analogous monotonicity result for blue vertices.

\begin{theorem} \label{red-monotone}
For $k\ge 2$, the chain of inequalities
\[
\frac13=c_2<c_3<c_4<\cdots<\frac25 \] holds.
Furthermore, 
\[
\lim_{k\to\infty}c_k=\frac25.
\]
Thus the asymptotic average number of red vertices in $\leq k$-trees increases monotonically from $n/3$ to $2n/5$ as $k$ increases from $2$ to infinity.
\end{theorem}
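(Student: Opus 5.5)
By equation \eqref{theratio}, whose derivation did not use the quadratic form of $\Phi$, we have $c_k=-\rho_k'(1)/\rho_k = T_k'(\rho_k)/\bigl(T_k'(\rho_k)+\Phi_k(\alpha_k)\bigr)$. Here $(\rho_k,\alpha_k,\tau_k)$ solve the critical system
\[
\tau=\rho\Phi_k(\tau),\qquad \alpha=\tau+\rho\Phi_k(\alpha),\qquad 1=\rho\Phi_k'(\alpha).
\]
As in the proof of Lemma~\ref{lem:phiandt}, I would use $T_k'(\rho)=\tau/\bigl(\rho(1-\rho\Phi_k'(\tau))\bigr)$ and $\Phi_k(\alpha)=(\alpha-\tau)/\rho$. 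This rewrites the constant as
\[
c_k=\frac{\tau_k}{\tau_k+(\alpha_k-\tau_k)\bigl(1-\rho_k\Phi_k'(\tau_k)\bigr)}.
\]
The value $c_2=1/3$ is Theorem~\ref{thm:quadratic-universality} applied to plane 1-2 trees. The value $2/5$ is the rooted plane tree computation of Subsection~\ref{subsec:appIdeal}, i.e.\ the same formula with $\Phi_\infty(x)=1/(1-x)$ and $(\rho,\alpha,\tau)=(4/25,3/5,1/5)$.

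\textbf{Limit.} I would show that the critical triple converges to $(4/25,3/5,1/5)$. Since $\Phi_k\le\Phi_{k+1}\le\Phi_\infty$ coefficientwise, the radius $\rho_k$ decreases in $k$ and satisfies $\rho_k\ge 4/25$. Next I would show that $\alpha_k$ stays in a compact subset of $[0,1)$. For $x\ge 1-\delta$ and $k$ large, $\Phi_k'(x)$ exceeds $1/\rho_k\le 1/(4/25)$, so the equation $1=\rho_k\Phi_k'(\alpha_k)$ forces $\alpha_k<1-\delta$ for a fixed $\delta>0$. Then $\tau_k<\alpha_k$ is bounded away from $1$ as well.

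On $[0,1-\delta]$ the functions $\Phi_k,\Phi_k'$ converge uniformly to $\Phi_\infty,\Phi_\infty'$, with error $O((1-\delta)^k)$. The Jacobian of the limiting system at $(4/25,3/5,1/5)$ is nonsingular, which is a direct check. The multivariable Implicit Function Theorem, applied with $k$ entering as a small perturbation parameter, then gives convergence of the triple. Hence $c_k\to 2/5$.

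\textbf{Strict monotonicity and $c_k<2/5$.} This is the main obstacle, because $k$ is discrete and the critical system is not explicitly solvable. My first attempt would be to eliminate variables and parametrize by $\alpha$. The equation $\rho=1/\Phi_k'(\alpha)$ gives $\rho$, and then $\tau$ is the smaller root of $\tau=\rho\Phi_k(\tau)$. The condition $\alpha-\tau=\rho\Phi_k(\alpha)$ then pins down $\alpha=\alpha_k$, and $c_k$ becomes an explicit function $G_k$ of $\alpha_k$.

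I would then try to show two things:
\begin{enumerate}
\item passing from $\Phi_k$ to $\Phi_{k+1}$ moves $\alpha_k$ monotonically (I expect it increases towards $3/5$) and decreases $\rho_k$;
\item the expression for $c_k$ is monotone in these quantities, comparing $\Phi_k$ and $\Phi_{k+1}=\Phi_k+x^{k+1}$ term by term via the effect of the extra term $x^{k+1}$ on each critical equation.
\end{enumerate}

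If a clean comparison fails, the fallback is a hybrid argument. For large $k\ge k_0$, expand the triple to first order in $\alpha_\infty^k=(3/5)^k$ using the Jacobian from the limit step. This should give $c_k=2/5-\kappa(3/5)^{k}(1+o(1))$ with an explicit $\kappa>0$, which yields both $c_k<2/5$ and $c_k<c_{k+1}$ for $k\ge k_0$ once the error terms are bounded rigorously. For the finitely many $2\le k<k_0$, I would verify the inequalities by solving the critical systems with certified numerics.
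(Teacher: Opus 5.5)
You correctly set up $c_k=\tau_k/\bigl(\tau_k+(\alpha_k-\tau_k)(1-\rho_k\Phi_k'(\tau_k))\bigr)$, which is the paper's \eqref{eq:ck-formula} since $\rho_k=1/\Phi_k'(\alpha_k)$. The value $c_2=1/3$ is immediate. Your limit argument (coefficientwise domination giving $\rho_k\ge 4/25$, confinement of $\alpha_k$ away from $1$, uniqueness of the limiting critical triple) is more careful than the paper's.

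The gap is the heart of the theorem. You never prove $c_{k+1}>c_k$, and without it you do not get $c_k<2/5$ for every $k$ either. Your plan (1)--(2) does not work as described, for three reasons:
\begin{itemize}
\item $\alpha_k$ \emph{decreases} to $3/5$ rather than increasing: $\alpha_2\approx1.58$ and $\alpha_3\approx0.91$.
\item The ingredients of your formula pull in opposite directions. Going from $k=2$ to $k=3$ to $\infty$, $\tau_k$ drops ($0.358$, $0.251$, $0.2$) and $1-\rho_k\Phi_k'(\tau_k)$ rises ($0.587$, $0.681$, $0.75$), and both of these lower $c_k$. Meanwhile $\alpha_k-\tau_k$ drops ($1.22$, $0.66$, $0.4$), which raises $c_k$.
\item The function $G_k$ itself changes with $k$.
\end{itemize}
So no termwise or monotone-in-each-quantity comparison can decide the sign; a quantitative argument is needed. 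Your fallback is an unexecuted computer-assisted proof with unspecified $k_0$ and error bounds. Its ansatz is also off: at $x=3/5$ one has $\Phi_\infty'(x)-\Phi_k'(x)\sim\tfrac52 k(3/5)^k$, and linearizing the critical system shows that $2/5-c_k$ is of order $k(3/5)^k$, not $(3/5)^k$.

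The missing idea, which is how the paper proceeds, is to make the discrete step $k\to k+1$ continuous:
\begin{itemize}
\item Set $\Phi_\lambda=\Phi_k+\lambda x^{k+1}$ for $0\le\lambda\le1$.
\item Eliminate $\rho$ to get $s\Phi_\lambda'(t)=\Phi_\lambda(s)$ and $(t-s)\Phi_\lambda'(t)=\Phi_\lambda(t)$. Their Jacobian determinant $\Phi_\lambda''(t)\bigl(sP+w(P-Q)\bigr)$ is positive, so $(s(\lambda),t(\lambda))$ is smooth.
\item Pass to $q=c/(1-c)=sP/\bigl(w(P-Q)\bigr)$, where $P=\Phi_\lambda'(t)$, $Q=\Phi_\lambda'(s)$ and $w=t-s$.
\item Show $dq/d\lambda>0$ by implicit differentiation. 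Then $q_{k+1}=q(1)>q(0)=q_k$, and $c_k<2/5$ follows from strict monotonicity plus the limit.
\end{itemize}

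Be warned that the closed form the paper displays for $q^{-1}\,dq/d\lambda$ does not pass a numerical check. At $k=2$, $\lambda=0$ (where $s\approx0.3577$ and $t\approx1.5766$) it evaluates to about $0.975$. Implicit differentiation and a central difference of the solved system both give about $0.61$. The sign there is positive, but proving $dq/d\lambda>0$ for all $k\ge2$ and $\lambda\in[0,1]$ is exactly what your proposal still owes, on either route.
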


\begin{proof}
Fix $k\ge2$.  Let $\rho_k$ be the dominant singularity of $A_k(z,1)$, and put
\[
s_k=T_k(\rho_k),
\qquad
t_k=A_k(\rho_k,1).
\]
At the dominant singularity, the defining equations and the characteristic equation give
\begin{equation}
\label{eq:bounded-critical}
s_k=\rho_k\Phi_k(s_k),
\qquad
t_k=s_k+\rho_k\Phi_k(t_k),
\qquad
1=\rho_k\Phi_k'(t_k).
\end{equation}
Eliminating $\rho_k$ yields the two convenient identities
\begin{equation}
\label{eq:bounded-critical2}
s_k\Phi_k'(t_k)=\Phi_k(s_k),
\qquad
(t_k-s_k)\Phi_k'(t_k)=\Phi_k(t_k).
\end{equation}

For the $\leq k$-tree family, before setting $u=1$, the first critical equation is
\[
\alpha_k(u)
=
T_k\bigl(\rho_k(u)u\bigr)
+
\rho_k(u)\Phi_k\bigl(\alpha_k(u)\bigr),
\]
where $\rho_k(u)$ is the dominant singularity and
\[
\alpha_k(u)=A_k\bigl(\rho_k(u),u\bigr).
\]
Differentiating this equation with respect to the red-marking variable $u$, and then setting $u=1$, exactly as in Subsection~\ref{sec:general}, gives
\begin{equation}
\label{eq:ck-formula}
c_k=
\frac{s_k}
{s_k+(t_k-s_k)\left(1-\dfrac{\Phi_k'(s_k)}{\Phi_k'(t_k)}\right)}.
\end{equation}
It will be slightly cleaner to use
\[
q_k=\frac{c_k}{1-c_k}.
\]
Then equation \eqref{eq:ck-formula} implies
\begin{equation}
\label{eq:qk-formula}
q_k=
\frac{s_k\Phi_k'(t_k)}
{(t_k-s_k)\bigl(\Phi_k'(t_k)-\Phi_k'(s_k)\bigr)}.
\end{equation}
Since $c=q/(1+q)$ is strictly increasing in $q$, it suffices to prove that $q_{k+1}>q_k$.

To compare two consecutive values of $k$, introduce the interpolation
\[
\Phi_\lambda(x)=1+x+\cdots+x^k+\lambda x^{k+1},
\qquad 0\le\lambda\le1.
\]
Thus $\Phi_0=\Phi_k$ and $\Phi_1=\Phi_{k+1}$.  Let $s=s(\lambda)$ and $t=t(\lambda)$ denote the corresponding critical pair, characterized by
\begin{equation}
\label{eq:lambda-critical}
s\Phi_\lambda'(t)=\Phi_\lambda(s),
\qquad
(t-s)\Phi_\lambda'(t)=\Phi_\lambda(t),
\end{equation}
and define
\[
q(\lambda)=
\frac{s\Phi_\lambda'(t)}
{(t-s)\bigl(\Phi_\lambda'(t)-\Phi_\lambda'(s)\bigr)}.
\]
We prove that $q'(\lambda)>0$ for $0<\lambda\le1$.

Put
\[
P=\Phi_\lambda'(t),
\qquad
Q=\Phi_\lambda'(s),
\qquad
w=t-s.
\]
Then $P>Q>0$, $s>0$, $w>0$, and
\[
q=\frac{sP}{w(P-Q)}.
\]
Differentiating the two equations in \eqref{eq:lambda-critical} with respect to $\lambda$, with $h(x)=x^{k+1}$, gives
\[
\frac{ds}{d\lambda}
=
\frac{w h(s)-s h(t)}{Pt-Qw}.
\]
In particular,
\[
\frac{ds}{d\lambda}<0,
\]
because
\[
w h(s)-s h(t)
=w s^{k+1}-s t^{k+1}<0,
\] so the numerator of $ds/d\lambda$ is negative, while its denominator is positive. 

Differentiating
\[
q(\lambda)=
\frac{s\Phi_\lambda'(t)}
{(t-s)\bigl(\Phi_\lambda'(t)-\Phi_\lambda'(s)\bigr)}
=
\frac{sP}{w(P-Q)}
\]
with respect to $\lambda$, and using the two critical identities
\eqref{eq:lambda-critical} to simplify the resulting expression gives
\begin{align}
\label{eq:qderivative-positive}
\frac1q\frac{dq}{d\lambda}
={}&
\frac{
(t^{k+1}-s^{k+1})
\displaystyle\sum_{0\le i<j\le k}(j-i)^2s^it^j
}
{(t-s)(Pt-Qw)(P-Q)P}
+
\frac{
s t\,(t-s)^2
\displaystyle\sum_{i=0}^{k-1}(k-i)^2s^it^{k-i-1}
}
{(t-s)(Pt-Qw)(P-Q)P}.
\end{align}
Every factor in the denominator is positive.  Indeed,
\[
Pt-Qw=P(s+w)-Qw=Ps+w(P-Q)>0.
\]
Every term in the two sums in the numerator is nonnegative, and the first sum is strictly positive because $t>s$.  Hence
\[
\frac{dq}{d\lambda}>0.
\]
Therefore
\[
q_{k+1}=q(1)>q(0)=q_k,
\]
and consequently $c_{k+1}>c_k$ for every $k\ge2$.

It remains to identify the endpoints.  When $k=2$,
then
$\Phi_2(x)=1+x+x^2$,
so this is exactly the plane 1-2-tree family treated in the first item of Subsection~\ref{subsec:appIdeal}; by Theorem~\ref{thm:quadratic-universality},
$
c_2=1/3.
$ 

As $k\to\infty$, the functions $\Phi_k$ and their derivatives converge locally uniformly on $|x|<1$ to
$
\Phi_\infty(x)=\frac1{1-x}.
$
The critical system described in equation \eqref{eq:bounded-critical2} therefore converges to the corresponding system for unrestricted rooted plane trees.  In the rooted-plane-tree case treated in the second item of Subsection~\ref{subsec:appIdeal}, the critical values were computed to be
$
s_\infty=1/5,
$ and
$t_\infty=3/5.
$
Substituting these values in equation \eqref{eq:ck-formula}, with
\[
\Phi_\infty'(x)=\frac1{(1-x)^2},
\]
gives
\[
\lim_{k\to\infty}c_k
=
\frac{1/5}
{1/5+(2/5)\left(1-\dfrac{25/16}{25/4}\right)}
=\frac25.
\]
This proves the stated chain of strict inequalities and the limiting value.
\end{proof}

It is worth emphasizing that the theorem concerns the values $c_k$, which are defined as certain
limits as $n$ goes to infinity.   It does not assert that, for each fixed $n$, the exact average is monotone in $k$; in fact this finite-size monotonicity can fail for small $n$. For instance, let $n=7$. 
Then the relevant average for $\leq 2$-trees is 3.024648, while for $\leq 3$-trees, it is 2.989890.

\subsection{Blue vertices}
We now turn to blue vertices. Let $B_k(z,u)$ count antichain-colored
$\leq k$-trees, with $u$ marking the blue vertices. Then
\[
B_k(z,u)=uT_k(z)+z\Phi_k\bigl(B_k(z,u)\bigr).
\]
In particular, $B_k(z,1)=A_k(z,1)$, so the critical quantities
$\rho_k,s_k,t_k$ in equation \eqref{eq:bounded-critical} are unchanged.
Write $B_{n,k}$ for the number of blue vertices in a uniformly chosen
antichain-colored $\leq k$-tree of size $n$. The general antichain formula
of Theorem~\ref{thm:bluegeneral} gives
\begin{equation} \label{blueformula}
\mathbb E(B_{n,k})=\beta_k n+O(1),
\qquad
\beta_k=\frac{s_k}{\rho_k\bigl(T_k'(\rho_k)+\Phi_k(t_k)\bigr)}.
\end{equation}
The required analytic hypotheses follow by the same smooth
implicit-function argument as for the red count: at $u=1$ the equations
coincide, $T_k$ is analytic at $\rho_k$, and the nonzero derivatives
$T_k'(\rho_k)+\Phi_k(t_k)$ and $\rho_k\Phi_k''(t_k)$ are the same.
The constant and linear terms of $\Phi_k$ ensure aperiodicity.

We will prove the strict monotonicity of the sequence $\beta_2, \beta_3,\ldots $ by proving that the
$\beta_k$ are closely related to the $c_k$ that we analyzed in the previous subsection (see equation
\eqref{eq:blue-red-relation}).

The following lemma supplies the additional monotonicity needed for the
blue proportions.

\begin{lemma}\label{lem:blue-ratio}
Let $m\geq3$ and
\[
\Phi_\lambda(y)=1+y+\cdots+y^{m-1}+\lambda y^m,
\qquad 0\leq\lambda\leq1.
\]
For the critical pair $s=s(\lambda)$, $t=t(\lambda)$ satisfying
\eqref{eq:lambda-critical}, the ratio
\[
r(\lambda)=\frac{\Phi_\lambda'(s)}{\Phi_\lambda'(t)}
\]
is strictly decreasing in $\lambda$.
\end{lemma}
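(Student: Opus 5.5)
Write $h(y)=y^m$, $P=\Phi_\lambda'(t)$, $Q=\Phi_\lambda'(s)$ and $w=t-s$, exactly as in the proof of Theorem~\ref{red-monotone} with $k=m-1$. Then $\log r=\log Q-\log P$, and the goal is to show
\[
\frac{d}{d\lambda}\log r=\frac{Q'}{Q}-\frac{P'}{P}<0,
\]
where primes denote total derivatives in $\lambda$. By the chain rule, $Q'=h'(s)+\Phi_\lambda''(s)\,s'$ and $P'=h'(t)+\Phi_\lambda''(t)\,t'$. We already have $s'=(wh(s)-sh(t))/(Pt-Qw)<0$ from the proof of Theorem~\ref{red-monotone}. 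The first step is to obtain $t'$ the same way: differentiate both identities in \eqref{eq:lambda-critical} implicitly and solve the resulting $2\times2$ linear system, whose determinant is (up to sign) $(Pt-Qw)\Phi_\lambda''(t)$ or a similar positive quantity.

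Next, rewrite everything without the data $\lambda$ and $\rho$ by using \eqref{eq:lambda-critical}:
\[
sP=\Phi_\lambda(s),\qquad wP=\Phi_\lambda(t).
\]
Substituting gives $(Q'P-QP')\cdot(\text{positive denominator})$ as an explicit polynomial $N(s,t)$, depending on $\lambda$ only through the coefficients of $\Phi_\lambda$.

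The main obstacle is then to show $N<0$. Following the pattern of \eqref{eq:qderivative-positive}, I would expand $N$ as a double sum over monomials $s^it^j$, coming from products of the coefficient lists $1,\dots,1,\lambda$. I would then symmetrize pairs $(i,j)$ and $(j,i)$ to extract factors $t^{j}s^{i}-t^is^j>0$ for $j>i$, aiming to write $-N$ as a sum of terms of the form
\[
(\text{nonnegative weight})\,(j-i)^2\, s^it^j\,(t^a-s^a).
\]
Signs are the real difficulty here, because $s'<0$ while $t'$ may have either sign. If the direct expansion is messy, my fallback is to use the relation between $\beta_k$ and $c_k$ together with $q'(\lambda)>0$. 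Concretely, express $r=1-\Phi_\lambda(s)/\bigl(q\,\cdot(\ldots)\bigr)$ using \eqref{eq:qk-formula}, which gives $q=s/\bigl(w(1-r)\bigr)$, i.e.
\[
1-r=\frac{s}{wq}.
\]
It would then suffice to show that $s/(wq)$ increases. Since $q$ increases, this requires a separate check that $s/w$ does not decrease too fast, and I would verify that via the explicit derivatives $s'$ and $t'$.

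Finally, the hypothesis $m\ge3$ is presumably where positivity of some middle coefficient sums is needed (e.g.\ the existence of terms with $1\le i\le m-2$). I would track that this is where it is used, and check the endpoint $\lambda=0$ separately if the strict inequality degenerates there.
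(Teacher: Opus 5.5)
Your proposal has a genuine gap. The sign of $r'$ is the whole content of the lemma, and you give no argument for it. Computing $Q'/Q-P'/P$ via $s'$ and $t'$, and then ``aiming'' to write $-N$ in a shape guessed from \eqref{eq:qderivative-positive}, is a plan rather than a proof; you also concede that $t'$ has no known sign. Worse, the sign is a real competition. View $r$ as a function of $s$ and $\lambda$: the explicit $\lambda$-dependence raises $r$ (by $s^m(m-r)/\Phi_\lambda(s)>0$), while $s'<0$ lowers it. So termwise bookkeeping cannot succeed unless the second critical identity is used quantitatively.

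The missing idea is to remove $t$ from $r$ altogether.
\begin{itemize}
\item The first identity in \eqref{eq:lambda-critical} gives $P=\Phi_\lambda(s)/s$. Hence $r=s\Phi_\lambda'(s)/\Phi_\lambda(s)=\sum_j jW_j/\sum_j W_j$, with $W_j=s^j$ for $j<m$ and $W_m=\lambda s^m$. Only $s'$ is then needed.
\item With $x=t/s$, the second identity reads $\sum_j W_jb_j=0$, where $b_j=x^j-x+1$. Differentiating it gives $s'/s=-s^mb_m/H$ with $H=\sum_j jW_jb_j=Pt-Qw>0$. This is consistent with your formula for $s'$.
\item Then $r'<0$ is equivalent to $b_m\sum_{i<j}W_iW_j(j-i)^2>H\sum_j(m-j)W_j$.
\item Symmetrizing the right side with $\sum_jW_jb_j=0$ turns the difference into $\sum_{i<j}W_iW_j(j-i)D_{ij}$, where $D_{ij}=(j-i)x^m-(m-i)x^j+(m-j)x^i$.
\item $D_{ij}$ is $0$ for $j=m$ and positive for $j<m$, by strict convexity of $j\mapsto x^j$. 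The pair $(0,1)$ gives strictness.
\end{itemize}
So $m\ge3$ plays no role in the sign. It only makes $\Phi_0$ nonlinear, so that a critical pair exists at $\lambda=0$.

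Your fallback fails as stated, because it uses the monotonicity of $q$ in the wrong direction. From $1-r=s/(wq)$ you need $s/(wq)$ to increase, but $q'>0$ makes $1/q$ decrease. You would therefore need $s/w=1/(x-1)$ to increase, and in fact $\frac{d}{d\lambda}\log(s/w)>\frac{d}{d\lambda}\log q>0$. That is not a check that $s/w$ ``does not decrease too fast.'' It is a quantitative comparison of two logarithmic derivatives, no easier than the lemma itself, so Theorem~\ref{red-monotone} gives you nothing here.
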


\begin{proof}
We will prove that $r'(\lambda) <0$  if $0< \lambda <1$. Computing this derivative will involve the quotient rule. Before we can compute that quotient, we need to take some preparatory steps to compute the numerator and the denominator. 
Just as we have done for red vertices,  we put $P=\Phi_\lambda'(t)$, $Q=\Phi_\lambda'(s)$ and $w=t-s$.
Define
\[
F_1(s,t)=s\Phi_\lambda'(t)-\Phi_\lambda(s),
\qquad
F_2(s,t)=(t-s)\Phi_\lambda'(t)-\Phi_\lambda(t).
\]
Since $0<s<t$ and $P>Q>0$, the Jacobian determinant of the map
$(s,t)\mapsto \big(F_1(s,t),F_2(s,t)\big)$, taken with respect to $s$ and $t$
with $\lambda$ held fixed, is
\[
\Phi_\lambda''(t)\bigl(sP+w(P-Q)\bigr)>0.
\]
The two components $F_1,F_2$, regarded as functions of $s,t,\lambda$,
are polynomials and hence analytic. Applying the multivariable analytic
Implicit Function Theorem stated in Subsubsection~\ref{subsubsec:tools},
with $d=2$ and $y=(s,t)$, shows that the critical pair
$\big(s(\lambda),t(\lambda)\big)$ depends analytically, and therefore
differentiably, on $\lambda$ locally. At the endpoints of $[0,1]$,
we use the corresponding one-sided derivatives.

Set $x=t/s>1$, and define the weights
\[
W_j=s^j\quad(0\leq j<m),\qquad W_m=\lambda s^m.
\]
All primes in the remainder of this proof denote derivatives with respect
to $\lambda$. In particular,
\[
W_j'=jW_j\frac{s'}s\quad(0\leq j<m),
\qquad
W_m'=mW_m\frac{s'}s+s^m.
\]
The two critical equations in \eqref{eq:lambda-critical} become
\begin{equation}\label{eq:blue-moment-critical}
\sum_{j=0}^m W_j jx^{j-1}=\sum_{j=0}^m W_j,
\qquad
\sum_{j=0}^m W_j x^j=(x-1)\sum_{j=0}^m W_j.
\end{equation}
Define $b_j=x^j-x+1$. After rearrangement, the last identity says that
\begin{equation}\label{eq:blue-zero-moment}
\sum_{j=0}^m W_jb_j=0.
\end{equation}

Differentiating equation \eqref{eq:blue-zero-moment} with respect to
$\lambda$, using the formulas for $W_j'$ above and the equality
$b_j'=(jx^{j-1}-1)x'$, gives
\[
0=\frac{s'}s\sum_{j=0}^m jW_jb_j+s^m b_m
+x'\sum_{j=0}^m W_j(jx^{j-1}-1).
\]
The last sum vanishes by the first identity in equation
\eqref{eq:blue-moment-critical}. Hence, writing
$H=\sum_{j=0}^m jW_jb_j$, we obtain
\[
\frac{s'}s H+s^m b_m=0.
\]
Since $b_j$ is strictly increasing in $j$, equation
\eqref{eq:blue-zero-moment} yields
\[
\left(\sum_{j=0}^m W_j\right)H
=\sum_{0\leq i<j\leq m}W_iW_j(j-i)(b_j-b_i)>0.
\]
Also $b_m>0$, so $s'/s=-s^m b_m/H<0$.

Now we are ready to prove that $r(\lambda)$ is decreasing in $\lambda$.
The first critical equation gives
\[
r=\frac{s\Phi_\lambda'(s)}{\Phi_\lambda(s)}
=\frac{\sum_{j=0}^m jW_j}{\sum_{j=0}^m W_j}.
\]
To differentiate the above displayed quotient for $r$, note that
\[
\left(\sum_{j=0}^m W_j\right)'
=\frac{s'}s\sum_{j=0}^m jW_j+s^m,
\qquad
\left(\sum_{j=0}^m jW_j\right)'
=\frac{s'}s\sum_{j=0}^m j^2W_j+ms^m.
\]
The quotient rule, together with the identity
\[
\left(\sum_{j=0}^m W_j\right)
\left(\sum_{j=0}^m j^2W_j\right)
-\left(\sum_{j=0}^m jW_j\right)^2
=\sum_{0\leq i<j\leq m}W_iW_j(j-i)^2,
\]
therefore gives
\[
r'=\frac{
\dfrac{s'}s\displaystyle\sum_{0\leq i<j\leq m}W_iW_j(j-i)^2
+s^m\displaystyle\sum_{j=0}^m(m-j)W_j
}{\left(\displaystyle\sum_{j=0}^m W_j\right)^2}.
\]
Substituting $s'/s=-s^m b_m/H$ shows that $r'<0$ is equivalent to
\begin{equation}\label{eq:blue-convexity-inequality}
b_m\sum_{0\leq i<j\leq m}W_iW_j(j-i)^2
>H\sum_{j=0}^m(m-j)W_j.
\end{equation}
Using equation \eqref{eq:blue-zero-moment} to symmetrize the product
on the right gives
\[
H\sum_{j=0}^m(m-j)W_j
=\sum_{0\leq i<j\leq m}W_iW_j(j-i)
\bigl((m-i)b_j-(m-j)b_i\bigr).
\]
Consequently, the difference between the left and right sides of the inequality in equation
\eqref{eq:blue-convexity-inequality} is
\[
\sum_{0\leq i<j\leq m}W_iW_j(j-i)D_{ij},
\qquad
D_{ij}=(j-i)b_m-(m-i)b_j+(m-j)b_i.
\]
For $i<j<m$,
\[
D_{ij}=(j-i)(m-i)
\left(\frac{b_m-b_i}{m-i}-\frac{b_j-b_i}{j-i}\right)>0,
\]
because $j\mapsto x^j-x+1$ is strictly convex for $x>1$.
For $j=m$ we have $D_{i,m}=0$. Since $W_0W_1>0$ and $1<m$,
the sum is strictly positive, proving inequality
\eqref{eq:blue-convexity-inequality} and hence $r'<0$.
\end{proof}

\begin{theorem}\label{blue-monotone}
For $k\geq2$, the limiting proportions of blue vertices satisfy
\[
\beta_2<\beta_3<\beta_4<\cdots<\frac3{10},
\qquad \lim_{k\to\infty}\beta_k=\frac3{10}.
\]
Here $\beta_2=0.1956536090\ldots$ is the plane 1--2-tree constant.
\end{theorem}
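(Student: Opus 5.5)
The plan is to prove an exact algebraic identity linking $\beta_k$ to the red constant $c_k$ and to the ratio $r_k=\Phi_k'(s_k)/\Phi_k'(t_k)$. Monotonicity then follows from Theorem~\ref{red-monotone} together with Lemma~\ref{lem:blue-ratio}, and the limit is read off from the rooted-plane-tree values.

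\emph{Step 1 (the relation).} I will work with $P=\Phi_k'(t_k)$, $Q=\Phi_k'(s_k)$, $w=t_k-s_k$, and use $\rho_k=1/P$ from \eqref{eq:bounded-critical}. Differentiating $T_k=z\Phi_k(T_k)$ gives
\[
T_k'(\rho_k)=\frac{\Phi_k(s_k)}{1-Q/P}.
\]
By \eqref{eq:bounded-critical2}, $\Phi_k(s_k)=s_kP$ and $\Phi_k(t_k)=wP$. Hence
\[
\rho_k\bigl(T_k'(\rho_k)+\Phi_k(t_k)\bigr)=\frac{s_kP}{P-Q}+w.
\]
Substituting this into \eqref{blueformula} gives
\[
\beta_k=\frac{s_k(1-r_k)}{s_k+w(1-r_k)},\qquad r_k=\frac{Q}{P}.
\]
Comparing with \eqref{eq:ck-formula}, which reads $c_k=s_k/(s_k+w(1-r_k))$, yields
\begin{equation}\label{eq:blue-red-relation}
\beta_k=c_k\,(1-r_k).
\end{equation}

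\emph{Step 2 (monotonicity).} Since $t_k>s_k>0$ and $\Phi_k'$ is increasing on $(0,\infty)$, we have $0<r_k<1$, so both factors in \eqref{eq:blue-red-relation} are positive. Theorem~\ref{red-monotone} gives $c_{k+1}>c_k$. For the other factor, I apply Lemma~\ref{lem:blue-ratio} with $m=k+1\ge 3$. At $\lambda=0$ the interpolating function is $\Phi_k$ and at $\lambda=1$ it is $\Phi_{k+1}$, and the critical pairs at the endpoints are $(s_k,t_k)$ and $(s_{k+1},t_{k+1})$. The lemma therefore gives $r_{k+1}<r_k$, so $1-r_{k+1}>1-r_k>0$. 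A product of two positive, strictly increasing factors is strictly increasing, hence $\beta_{k+1}>\beta_k$. The value $\beta_2$ is the plane 1--2 tree constant already computed from \eqref{bluequadratic3}.

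\emph{Step 3 (limit and upper bound).} As in the proof of Theorem~\ref{red-monotone}, the system \eqref{eq:bounded-critical2} converges to the rooted-plane-tree system, so $s_k\to 1/5$ and $t_k\to 3/5$. Then
\[
r_k\to\frac{(1-1/5)^{-2}}{(1-3/5)^{-2}}=\frac{25/16}{25/4}=\frac14,
\]
and $c_k\to 2/5$. Therefore $\beta_k\to \frac25\cdot\frac34=\frac3{10}$, which agrees with the rooted plane tree value found in the applications of Theorem~\ref{thm:bluegeneral}. A strictly increasing sequence is strictly below its limit, which gives $\beta_k<3/10$ for every $k$.

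The only delicate point is the convergence of the critical pair $(s_k,t_k)$ as $k\to\infty$. I would handle it exactly as in Theorem~\ref{red-monotone}. The roots lie in a compact subset of $(0,1)$, and $\Phi_k\to\Phi_\infty$ with derivatives locally uniformly there. The limiting system has a nonsingular Jacobian, which is the determinant $\Phi''(t)(sP+w(P-Q))>0$ computed in Lemma~\ref{lem:blue-ratio}. These facts give convergence to the unique limiting solution. The rest is the short algebra of Step 1.
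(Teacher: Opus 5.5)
Your proposal is essentially the paper's proof. It uses the same identity $\beta_k=c_k(1-r_k)$, obtained from $\rho_k=1/P$, $\Phi_k(s_k)=s_kP$ and $\Phi_k(t_k)=wP$; monotonicity via Lemma~\ref{lem:blue-ratio} with $m=k+1$ together with Theorem~\ref{red-monotone}; and the limits $s_k\to1/5$, $t_k\to3/5$, $r_k\to1/4$, $c_k\to2/5$.

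One caution on your extra remark about convergence: the critical pairs do not all lie in a compact subset of $(0,1)$, since for plane 1--2 trees $t_2=(1-\rho_2)/(2\rho_2)\approx1.58$. So such confinement can only hold for large $k$ and would itself need proof; the paper simply asserts the convergence of the critical pair, so your argument is no less complete than its own.
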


\begin{proof}
Put $P_k=\Phi_k'(t_k)$, $Q_k=\Phi_k'(s_k)$, $w_k=t_k-s_k$,
and $r_k=Q_k/P_k$. Differentiating $T_k(z)=z\Phi_k\big(T_k(z)\big)$
and using equation \eqref{eq:bounded-critical} gives
\[
T_k'(\rho_k)
=\frac{\Phi_k(s_k)}{1-\rho_k\Phi_k'(s_k)}
=\frac{s_kP_k^2}{P_k-Q_k}.
\]
Since $\rho_k=1/P_k$ and $\Phi_k(t_k)=w_kP_k$, the general formula for $\beta_k$ in equation (\ref{blueformula})
simplifies to
\[
\beta_k=\frac{s_k(P_k-Q_k)}{s_kP_k+w_k(P_k-Q_k)}.
\]
Comparing with equation \eqref{eq:ck-formula}, we obtain
\begin{equation}\label{eq:blue-red-relation}
\beta_k=c_k(1-r_k).
\end{equation}
Lemma~\ref{lem:blue-ratio}, applied with $m=k+1$, gives
$r_{k+1}<r_k$. Since $0<r_k<1$ and
Theorem~\ref{red-monotone} gives $c_{k+1}>c_k>0$, equation
\eqref{eq:blue-red-relation} proves $\beta_{k+1}>\beta_k$.

Finally, the convergence of the critical pair used in the proof of
Theorem~\ref{red-monotone} gives $s_k\to1/5$ and $t_k\to3/5$.
To evaluate the limit of $r_k$, recall that
\[
\Phi_k'(y)=\sum_{j=1}^k jy^{j-1},
\qquad
\sum_{j=1}^{\infty}jy^{j-1}=\frac{1}{(1-y)^2}
\quad (|y|<1).
\]
The partial sums converge uniformly on every closed interval contained
in $(-1,1)$. Since $s_k\to1/5$ and $t_k\to3/5$, both arguments
belong to $[0,4/5]$ for all sufficiently large $k$. Uniform convergence
on this interval allows us to evaluate the partial sums at these moving
arguments. Together with continuity of $(1-y)^{-2}$, it gives
\[
\Phi_k'(s_k)\longrightarrow\frac{1}{(1-1/5)^2}=\frac{25}{16},
\qquad
\Phi_k'(t_k)\longrightarrow\frac{1}{(1-3/5)^2}=\frac{25}{4}.
\]
Consequently,
\[
r_k=\frac{\Phi_k'(s_k)}{\Phi_k'(t_k)}
\longrightarrow\frac{25/16}{25/4}=\frac14.
\]
Together with $c_k\to2/5$, this gives
$\beta_k\to(2/5)(1-1/4)=3/10$.
The value of $\beta_2$ was computed for plane 1--2 trees in
Section~\ref{sec:antichains}. Strict monotonicity and the limit imply
$\beta_k<3/10$ for every finite $k$.
\end{proof}

\section{Further directions}
We finish by two intriguing open questions. 
\begin{enumerate}
\item Theorem~\ref{thm:quadratic-universality} shows that the average number of red vertices
in all trees of size $n$ was asymptotically $n/3$, regardless from the class of trees we considered, 
as long as that class of trees had a quadratic branching function. In Section~\ref{sec:antichains}, we have seen that this was not true for the average number of blue vertices. Are there other natural parameters that are 
asymptotically equal over all classes of trees with a quadratic branching function?
\item Comparing results from Sections \ref{sec:ideals} and \ref{sec:antichains}, we see that in decreasing binary trees of size $n$, the average number
of red vertices is twice the average number of blue vertices. In other words, on average, ideals are twice as large as their set of maximal elements. Is there an intuitive reason for this? 
\end{enumerate}

\appendix
\section{Verification of the analytic hypotheses for Theorems~\ref{thm:quadratic-universality} and~\ref{thm:bluegeneral}}
\label{app:analytic-verification}

We first carry out the verification for
Theorem~\ref{thm:quadratic-universality}, as promised in its proof.  We then
explain why the corresponding verification for
Theorem~\ref{thm:bluegeneral} is essentially identical.

\medskip
\noindent\textbf{The ideal equation.}
For the ideal
equation \eqref{defining}, write
\[
a=\phi_0,\qquad b=\phi_1,\qquad c=\phi_2,
\]
so that $a,c>0$ and $b\geq 0$.  The equation
$
T=z(a+bT+cT^2)
$
is quadratic in $T$, and the branch satisfying $T(0)=0$ is
\[
T(z)=\frac{1-bz-\sqrt{(1-bz)^2-4ac z^2}}{2cz}.
\]
Thus the radius of convergence of $T$ is
$
\sigma=\frac{1}{b+2\sqrt{ac}},
$
and
$
T(\sigma)=\sqrt{\frac{a}{c}}.
$

Similarly, \eqref{defining} is quadratic in $A$.  Its solution satisfying
$A(0,u)=0$ is
\[
A(z,u)=\frac{1-bz-\sqrt{D(z,u)}}{2cz},
\]
where
\[
D(z,u)=(1-bz)^2-4cz\bigl(T(zu)+az\bigr).
\]
At $u=1$, we have $D(0,1)=1$, while, as $z\to\sigma^-$,
\[
D(z,1)\longrightarrow -4c\sigma\sqrt{\frac{a}{c}}<0.
\]
Moreover, for $0<z<\sigma$,
\[
D_z(z,1)
=-2b(1-bz)-4c\bigl(T(z)+zT'(z)+2az\bigr)<0,
\]
because $1-bz>0$ on this interval.  Hence there is a unique
$\rho\in(0,\sigma)$ such that
$
D(\rho,1)=0,
$
and this zero is simple.

Since $\rho<\sigma$, the function $T(zu)$ is analytic in a neighborhood of
$(z,u)=(\rho,1)$.  Since $D_z(\rho,1)\neq0$, the analytic Implicit Function
Theorem gives an analytic function $\rho(u)$, for $u$ near $1$, such that
\[
D\big(\rho(u),u\big)=0,
\qquad \rho(1)=\rho.
\]
The corresponding critical value is
$
\alpha(u)=\frac{1-b\rho(u)}{2c\rho(u)},
$
so $\alpha(u)$ is analytic as well.  After shrinking the neighborhood of
$u=1$ if necessary, we still have $\rho(u)u$ strictly inside the disk of
analyticity of $T$.

For
$
H(z,w,u)=T(zu)+z\Phi(w),
$
we have
\[
H_z=uT'(zu)+\Phi(w),
\qquad
H_{ww}=2cz.
\]
At $(\rho,\alpha,1)$ both quantities are nonzero, and hence they remain
nonzero for $u$ sufficiently close to $1$.  Thus the two nondegeneracy
conditions in the Smooth Implicit-Function Schema hold.

It remains only to discuss dominance.  Since $\rho<\sigma$, the function
$T(z)$ is analytic on a neighborhood of the closed disk $|z|\leq\rho$.
Consequently, any singularity of $A(z,1)$ on the circle $|z|=\rho$ must
come from a zero of $D(z,1)$.  Suppose that $\zeta$ is such a zero, with
$|\zeta|=\rho$.  At a zero of the discriminant the quadratic equation for
$A$ has a double root, and therefore
$
1=\zeta\Phi'\big(A(\zeta,1)\big).
$
Because the coefficients of $A(z,1)$ are nonnegative and $A(\rho,1)=\alpha$
is finite, and
$
|A(\zeta,1)|\leq A(\rho,1)=\alpha.
$
Hence
\[
1
=|\zeta|\,|b+2cA(\zeta,1)|
\leq \rho\bigl(b+2c|A(\zeta,1)|\bigr)
\leq \rho(b+2c\alpha)
=1.
\]
Thus equality holds throughout.  If $b>0$, the coefficients of $A(z,1)$
are positive for every positive size, so equality in the triangle
inequality forces $\zeta=\rho$.  Hence $\rho$ is the unique dominant
singularity.  If $b=0$, every tree has odd size, so the support has period
$2$; the same equality argument gives only the two possibilities
$\zeta=\rho$ and $\zeta=-\rho$.  Thus these are precisely the dominant
singularities in the periodic case.

Finally, $D_z(\rho,1)\neq0$, so the relevant zero of the discriminant is
simple.  Since $T(zu)$ is analytic there, the explicit formula for $A(z,u)$
shows that $A$ has a square-root singularity and admits analytic
continuation to a $\Delta$-domain at the dominant singularity.  By analytic
perturbation, the simple zero $\rho(u)$ and the corresponding isolated
singularities vary continuously and analytically for $u$ near $1$, and the
same $\Delta$-domain construction may be chosen uniformly in $u$ after
shrinking the neighborhood of $1$ if necessary.  Therefore all the
hypotheses of the parameterized Smooth Implicit-Function Schema are
satisfied, with the usual periodic modification when $b=0$.

\medskip
\noindent\textbf{The antichain equation.}
We now verify the corresponding hypotheses for
Theorem~\ref{thm:bluegeneral}.  The argument is almost the same.  For
quadratic
\[
\Phi(y)=a+by+cy^2,
\qquad a,c>0,\quad b\geq0,
\]
equation \eqref{bluedefining} is quadratic in $B$, and the branch satisfying
$B(0,u)=0$ is
\[
B(z,u)=\frac{1-bz-\sqrt{\widetilde D(z,u)}}{2cz},
\]
where
\[
\widetilde D(z,u)
=
(1-bz)^2-4cz\bigl(uT(z)+az\bigr).
\]
At $u=1$ we have
$
\widetilde D(z,1)=D(z,1).
$
Thus the dominant singularity $\rho$ and the critical value $\alpha$ at
$u=1$ are exactly the same as in the ideal case.  In particular,
$\rho<\sigma$, where $\sigma$ is the radius of convergence of $T$, and
$\widetilde D_z(\rho,1)=D_z(\rho,1)\neq0$.  The analytic Implicit Function
Theorem therefore gives an analytic singularity function $\rho(u)$ near
$u=1$, determined by
$
\widetilde D\big(\rho(u),u\big)=0,
$
and the function corresponding to the. critical value
$
\alpha(u)=\frac{1-b\rho(u)}{2c\rho(u)}
$
is analytic as well.

For the antichain equation the implicit-function map is
\[
\widetilde H(z,w,u)=uT(z)+z\Phi(w).
\]
Hence
\[
\widetilde H_z=uT'(z)+\Phi(w),
\qquad
\widetilde H_{ww}=2cz.
\]
At $(\rho,\alpha,1)$ these are the same nonzero quantities that occur in
the preceding verification.  The dominance argument is also unchanged,
because at $u=1$ the univariate equations for $A(z,1)$ and $B(z,1)$ are
identical:
\[
A(z,1)=B(z,1)=T(z)+z\Phi\bigl(A(z,1)\bigr).
\]
Consequently the same uniqueness argument on the circle $|z|=\rho$, with
the same periodic exception when $b=0$, applies to $B(z,1)$.  Since the
zero of $\widetilde D(z,1)$ at $\rho$ is simple, analytic perturbation again
gives the required uniform square-root expansion and $\Delta$-domain for
$u$ near $1$.

The only real difference from the ideal case is in the way the marking
variable enters the functional equation: in equation \eqref{defining} it occurs
inside $T(zu)$, whereas in equation \eqref{bluedefining} it appears as the factor
$uT(z)$.  Analytically this makes the antichain case slightly simpler:
there is no need to check separately that $\rho(u)u$ stays inside the disk
of analyticity of $T$; it is enough, after shrinking the neighborhood of
$u=1$, that $\rho(u)<\sigma$.  Thus all the hypotheses of the parameterized
Smooth Implicit-Function Schema used in
Theorem~\ref{thm:bluegeneral} are satisfied for the quadratic families
considered in Section~\ref{sec:antichains}.

\section*{Acknowledgements}
Mikl\'os B\'ona  is partially supported by Simons Foundation Grant 940024.
Ricardo Gómez-Aíza research is partially supported by DGAPA-PAPIIT grant number IN112725.

\section*{Statement on the use of Artificial Intelligence}
We used artificial intelligence to carry out some computations, and to create Java code that we subsequently used to generate the data to graph the random trees shown in the figures.

\end{document}